\newcommand\bx{\boldsymbol{x}}
\newcommand\bn{\boldsymbol{n}}
\newcommand\bu{\boldsymbol{u}}
\newcommand\bv{\boldsymbol{v}}
\newcommand\bg{\boldsymbol{g}}
\newcommand\bdf{\tilde{\boldsymbol{f}}}
\newcommand\bbR{\mathbb{R}}
\newcommand\bbN{\mathbb{N}}
\newcommand\dd{\,\mathrm{d}}
\newcommand\Kn{\mathit{Kn}}
\newcommand\mM{\mathcal{M}}
\newcommand\mH{\mathcal{H}}
\newcommand\mF{\mathcal{F}}
\newcommand\bw{\boldsymbol{w}}
\newcommand{\indexk}{\alpha}
\newcommand{\indexi}{\beta}
\newcommand{\indexj}{\gamma}
\newcommand{\mfm}{m}
\newcommand{\lu}{\overline{\bu}}
\newcommand{\lT}{\overline{\theta}}
\newcommand{\tf}{\tilde{f}}
\newcommand{\tQ}{\tilde{Q}}
\newcommand\pd[2]{\dfrac{\partial {#1}}{\partial {#2}}}
\newcommand\pdd[1]{\dfrac{\partial}{\partial {#1}}}
\title{Numerical Simulation of Microflows Using Hermite Spectral
  Methods\thanks{Zhicheng Hu's work is partially supported by the
    National Natural Science Foundation of China (11601229), and the
    Natural Science Foundation of Jiangsu Province of China
    (BK20160784). Zhenning Cai's work was supported by National
    University of Singapore Startup Fund grant
    R-146-000-241-133. Yanli Wang's work was supported by the National
    Natural Science Foundation of China (11501042), and the
    Postdoctoral Science Foundation of China (2018M631233).  The
    computational resources are supported by the highperformance
    computing platform of Peking University, China }}
\author{Zhicheng Hu\thanks{Department of Mathematics, College of Science,
    Nanjing University of Aeronautics and Astronautics, Nanjing
    210016, China (\email{huzhicheng@nuaa.edu.cn}).} \and
  Zhenning Cai\thanks{Department of Mathematics, National
    University of Singapore, Level 4, Block S17, 10 Lower Kent Ridge
    Road, Singapore 119076 (\email{matcz@nus.edu.sg}).} \and
  Yanli Wang\thanks{Department of Engineering, Peking University, Beijing
    100871, China (\email{wang\_yanli@pku.edu.cn}).}}
\begin{document}

\maketitle

\begin{abstract}
  We propose a Hermite spectral method for the spatially inhomogeneous
  Boltzmann equation. For the inverse-power-law model, we generalize
  a class of approximate quadratic collision operators defined in the
  normalized and dimensionless setting to operators for arbitrary
  distribution functions. An efficient algorithm with a fast transform
  is introduced to discretize the new collision operators. The method
  is tested for one- and two-dimensional benchmark microflow problems.
\end{abstract}

\begin{keywords}
Boltzmann equation, Hermite spectral method, microflow
\end{keywords}

\begin{AMS}
76P05
\end{AMS}

\section{Introduction} \label{sec:intro}
Rarefied gas dynamics studies the gas flows when the mean free path of
the gas molecules is comparable to the characteristic length of the
problem we are concerned about. Typical cases include the gas dynamics
in astronautics (large mean free path) and the
micro-electro-mechanical systems (small characteristic length). In
these cases, continuum fluid models such as Euler equations and
Navier-Stokes equations are no longer accurate; on the other hand,
molecular dynamics is still too expensive to solve these problems due
to the huge number of gas molecules. Therefore, people usually adopt
the method in statistical physics to obtain the mesoscopic kinetic
models for rarefied gas dynamics. One of the most important models is
the Boltzmann equation derived from molecular chaos assumption, which
reads
\begin{equation} \label{eq:Boltzmann}
\frac{\partial f}{\partial t} + \nabla_{\bx} \cdot (\bv f)  = Q[f, f],
  \qquad t\in \mathbb{R}^+, \quad \bx \in \Omega \subset \mathbb{R}^3,
    \quad \bv \in \mathbb{R}^3.
\end{equation}
Here the unknown function $f(t,\bx,\bv)$ is the distribution function,
which describes the number density of molecules in the joint
position-velocity ($\bx$-$\bv$) space at time $t$. The right-hand side
of \eqref{eq:Boltzmann} models the collision between gas molecules. It
usually takes the quadratic form:
\begin{equation} \label{eq:quad_col}
  \begin{split}
  &Q[f, f](t,\bx,\bv) = \\
  & \qquad \int_{\mathbb{R}^3} \int_{\bn \perp \bg} \int_0^{\pi}
    [f(t,\bx,\bv_1') f(t,\bx,\bv') - f(t,\bx,\bv_1) f(t,\bx,\bv)]
    B(|\bg|,\chi)
  \dd\chi \dd\bn \dd\bv_1.
  \end{split}
\end{equation}
Here $\bn$ is a unit vector perpendicular to the relative velocity
$\bg = \bv - \bv_1$, and $\bv', \bv_1'$ are post-collisional
velocities
\begin{equation} \label{eq:post_vel}
\begin{aligned}
\bv' &= \cos^2(\chi/2) \bv + \sin^2(\chi/2) \bv_1
  - |\bg| \cos(\chi/2) \sin(\chi/2) \bn, \\
\bv_1' &= \cos^2(\chi/2) \bv_1 + \sin^2(\chi/2) \bv
  + |\bg| \cos(\chi/2) \sin(\chi/2) \bn.
\end{aligned}
\end{equation}
The collision kernel $B(\cdot,\cdot)$ is a non-negative function
describing the interaction between molecules.

It is generally accepted that the Boltzmann equation provides
solutions with enough accuracy in rarefied gas dynamics. The DSMC
(Direct Simulation of Monte Carlo) method \cite{Bird}, a stochastic
numerical solver for the Boltzmann equation, has been widely used in
the simulation. The DSMC method is usually efficient in solving highly
rarefied flows and steady-state problems, whereas in this work, we are
interested in deterministic solvers, which are expected to be better
at flows in the hydrodynamic regime and dynamic problems. Meanwhile,
we also anticipate smoother numerical results and higher order of
convergence using deterministic solvers.

Obviously, the most complicated part of the Boltzmann equation is the
collision term \eqref{eq:quad_col}, which is also supposed to be the
most expensive part in the numerical method. One classical method to
discretize \eqref{eq:quad_col} is the discrete velocity method
\cite{Goldstein1989}, which turns out to be inefficient due to its low
convergence order \cite{Panferov2002}. A much more efficient method is
the Fourier spectral method \cite{Pareschi1996,Mouhot2006,Hu2017}, and
some two-dimensional and three-dimensional simulations have been
carried out based on this method \cite{Dimarco2018}. To seek higher
numerical efficiency, the Hermite spectral method has been introduced
in \cite{Gamba2018, QuadraticCol} to solve spatially homogeneous
Boltzmann equation. The idea of Hermite spectral method can be traced
back to Grad's classical paper \cite{Grad}. Grad's method is based on
the fact that the collision operator vanishes when the distribution
function takes the form of the Maxwellian
$f(\bv) = \rho \mM_{\bu, \theta}(\bv)$, where $\rho, \theta > 0$,
$\bu \in \mathbb{R}^3$ and
\begin{equation}
  \label{eq:general_Maxwellian}
  \mM_{\bu,\theta}(\bv)
  := \frac{1}{\mfm (2\pi \theta)^{3/2}}
  \exp \left( -\frac{|\bv - \bu|^2}{2 \theta} \right),
\end{equation}
where $\mfm$ is the mass of a single gas molecule. Therefore for
smaller Knudsen number, the distribution function is expected to be
closer to the Maxwellian. By such a property, it is natural to
consider the expansion of the distribution function using orthogonal
polynomials with the weight function $\mM_{\bu,\theta}$. These
polynomials are just the Hermite polynomials. Using this expansion,
the explicit formulae for all the equations in Galerkin's method are
derived for the first time in \cite{QuadraticCol}, and for
inverse-power-law models, numerical tests have been carried out.
Meanwhile, a modelling technique is introduced therein to simplify the
collision term so that the computational cost can be reduced.

However, the work in \cite{QuadraticCol} is not ready for the
simulation of the spatially inhomogeneous Boltzmann equation. The
major reason is that the simplified collision model has a simple form
only when the distribution function is represented as ``Grad's
series'', which means that the parameters $\bu$ and $\theta$ in the
weight function $\mM_{\bu,\theta}$ are respectively the local mean
velocity and the local temperature in energy units. When considering
spatially inhomogeneous problems, these parameters vary spatially,
resulting in different basis functions at different spatial locations.
Consequently, the discretization of the spatial derivative becomes
nontrivial.

There are two possible ways to resolve this issue. One is to introduce
projections to deal with operations between distribution functions
represented by different basis functions, which essentially introduces
nonlinearity into the discrete convection term. This approach is used
in \cite{Cai2018} for the linearized collision operator, from which it
can be seen that the implementation is rather involved. The other way
is to use uniform basis functions for all spatial grid points, and
find an appropriate representation for the simplified collision term
proposed in \cite{QuadraticCol}. This paper will follow the second
idea and it will turn out that the implementation is relatively
easier.

Numerical simulations will be done for 1D benchmark problems in
microflows including Couette flows and Fourier flows. For such
problems, numerical results in \cite{Cai2012, Chen2015} have shown
that the BGK-type models cannot provide reliable predictions when the
Knudsen number is large. Results in \cite{Cai2018} show that even for
linearized collision models, obvious deviation can be observed when
compared with DSMC results. In this work, we are going to show better
agreement with DSMC results using our method. Additionally, to test
the efficiency of our method, some preliminary 2D tests for lid-driven
cavity flows are also carried out.

The rest of this paper is organized as follows: Section \ref{sec:pre}
is a review of the background of our method, and the description of
our algorithm is mostly given in Section \ref{sec:ME}, with the
discussion on boundary conditions left to Section \ref{sec:bc}.
Numerical results are exhibited in Section \ref{sec:num}, and the
paper ends with a conclusion in Section \ref{sec:conclusion}.

\section{Preliminaries} \label{sec:pre}
In this section, we are going to provide some preliminary knowledge
for our further discussion, including the discretization of the
distribution function introduced in a previous work \cite{NRxx}, and
the collision kernels we are going to consider later. A brief review
of these topics will be given in the following two subsections.

\subsection{Discretization of the distribution function}
In most cases, when solving the Boltzmann equation, we are not
interested in the distribution function itself. What we are really
concerned about is usually the macroscopic physical quantities such as
the density $\rho$, momentum $\boldsymbol{m}$ and energy $E$. These
quantities are in fact the moments of the distribution function, and
are related to the distribution function $f(t,\bx,\bv)$ by
\begin{equation}
  \label{eq:macro_var}
  \begin{aligned}
  \rho(t,\bx) &= \mfm \int_{\bbR^3} f(t,\bx,\bv) \dd \bv, \\
  \boldsymbol{m}(t,\bx) &= \mfm \int_{\bbR^3}\bv f(t,\bx,\bv) \dd\bv, \\
  E(t,\bx) &= \mfm \int_{\bbR^3} \frac{|\bv|^2}{2} f(t,\bx,\bv) \dd \bv,
  \end{aligned}
\end{equation}
where $\mfm$ is the mass of a single gas molecule. Due to the
importance of the moments, Grad \cite{Grad} proposed an expansion of
the distribution function in the velocity space which has easy access
to these moments, and the approximation of the distribution function
is a truncation of the series. Here we adopt the equivalent notation
used in \cite{NRxx}:
\begin{equation} 
  \label{eq:expansion}
  f(t, \bx, \bv) =
  \sum_{\alpha \in \bbN^3}\tf_{\alpha}^{[\lu, \lT]}(t, \bx)
  \mH_{\alpha}^{[\lu, \lT]}(\bv),
\end{equation}
where $\lu \in \mathbb{R}^3$ and $\lT \in \mathbb{R}_+$ have
respectively the same dimensions as $\bv$ and $|\bv|^2$, and they can
be chosen such that the convergence of the series is fast.\footnote{In
\cite{Grad, NRxx}, the parameters $\lu$ and $\lT$ are chosen to be the
local velocity and scaled temperature of the gas.  Here we are
discussing about a more general form of the expansion.} For any
multi-index $\alpha = (\alpha_1, \alpha_2, \alpha_3) \in
\mathbb{N}^3$, its norm is defined as $|\alpha| = \alpha_1 + \alpha_2
+ \alpha_3$, and the basis function $\mH_{\alpha}^{[\lu, \lT]}(\bv)$
is defined by
\begin{equation}
  \label{eq:mH}
  \mH_{\alpha}^{[\lu, \lT]}(\bv) = \lT^{-\frac{|\alpha|}{2}}H_{\alpha}\left(\frac{\bv -
      \lu}{\sqrt{\lT}}\right)\mM_{\lu, \lT}(\bv),
\end{equation}
with $H_{\alpha}$ being the Hermite polynomial
\begin{equation}
  H_{\alpha}(\bv) =
    (-1)^{|\alpha|} \exp \left( \frac{|\bv|^2}{2} \right)
    \frac{\partial^{|\alpha|}}{\partial v_1^{\alpha_1} \partial
      v_2^{\alpha_2} \partial v_3^{\alpha_3}}
    \left[ \exp \left( -\frac{|\bv|^2}{2} \right) \right].
\end{equation}
An advantage of this expansion is that the coefficients are also
``moments'' of the distribution function. For example, when $\alpha =
0$, the coefficient $\tf_{\alpha}^{[\lu,\lT]}$ is just the density
of the distribution function $\rho$. Other moments can also be easily
represented by these coefficients. More details will be revealed later
in this section.

Our discretization of the distribution function is simply a truncation
of the series \eqref{eq:expansion}:
\begin{equation} 
  \label{eq:expansion_finite}
  f_M(t,\bx,\bv) :=
    \sum_{|\alpha| \leqslant M}\tf_{\alpha}^{[\lu, \lT]}(t, \bx)
  \mH_{\alpha}^{[\lu, \lT]}(\bv) \in \mF_M(\lu, \lT),
  \qquad M \in \bbN,
\end{equation}
where the finite dimensional function space $\mF_M(\lu, \lT)$ is
\begin{equation}
  \label{eq:PM} 
  \mF_M(\lu, \lT) = \mathrm{span} \{ \mH_{\indexk}^{[\lu, \lT]}(\bv)
  \mid |\alpha| \leqslant M\}.
\end{equation}
Apparently, for every $t$ and $\bx$, $f_M$ is an approximation of $f$
in the function space $\mF_M(\lu, \lT)$. For simplicity, from now on,
we will consider $\lu$ and $\lT$ as constants. The coefficients
$\tf_{\alpha}^{[\lu,\lT]}$ will be shortened as $\tf_{\alpha}$, and
the basis function $\mH_{\indexk}^{[\lu, \lT]}$ will be shortened as
$\mH_{\indexk}$. For example, the equation \eqref{eq:expansion_finite}
is simplified as
\begin{equation} \label{eq:f_fM}
  f_M(t,\bx,\bv) :=
    \sum_{|\alpha| \leqslant M}\tf_{\alpha}(t, \bx)
    \mH_{\alpha}(\bv) \in \mF_M(\lu, \lT),
\end{equation}
which looks more concise. However, when we use parameters other than
$\lu$ and $\lT$ in \eqref{eq:expansion}, the parameters will still be
explicitly written out.

One advantage of the approximation \eqref{eq:f_fM} is that the
truncation preserves low-order moments. Precisely speaking, using the
canonical unit vectors $e_1$, $e_2$ and $e_3$ to denote the
multi-indices $(1,0,0)$, $(0,1,0)$ and $(0,0,1)$, we have
\begin{equation} \label{eq:csv_mnts}
  \rho = \tf_0,  \qquad
  \boldsymbol{m} = \rho \lu +
    (\tf_{e_1}, \tf_{e_2}, \tf_{e_3})^T, \qquad
  E = \boldsymbol{m} \cdot \lu - \frac{1}{2} \rho |\lu|^2
    + \frac{3}{2} \rho \lT + \sum_{d=1}^3 \tf_{2e_d},
\end{equation}
which can be derived from the orthogonality of Hermite polynomials
\begin{equation}
  \label{eq:Her_orth}
  \int_{\bbR^3}H_{\alpha}(\bv)H_{\beta}(\bv)
    \exp \left( -\frac{|\bv|^2}{2} \right) \dd \bv
  = \frac{(2\pi)^{3/2}}{\alpha!}
    \delta_{\alpha_1\beta_1}\delta_{\alpha_2\beta_2}\delta_{\alpha_3\beta_3},
\end{equation}
where $\alpha! = \alpha_1! \alpha_2! \alpha_3!$. With these moments,
we can also obtain the mean velocity $\bu$ and the temperature $T$ by
\begin{equation} \label{eq:u_T}
\bu = \frac{\boldsymbol{m}}{\rho}, \qquad
T = \frac{\mfm}{k_B} \frac{2E - \rho |\bu|^2}{3\rho},
\end{equation}
where $k_B$ is the Boltzmann constant. Following the convention, we
define
\begin{equation} \label{eq:theta}
\theta = \frac{k_B}{\mfm} T = \frac{2E - \rho |\bu|^2}{3\rho}.
\end{equation}
It can be seen that all the quantities from \eqref{eq:csv_mnts} to
\eqref{eq:theta} are not changed by the truncation \eqref{eq:f_fM} if
$M \geqslant 2$. More generally, by the orthogonality
\eqref{eq:Her_orth}, we can obtain the coefficients $\tf_{\alpha}$
from the distribution function by
\begin{equation}
  \label{eq:coe}
  \tf_{\alpha}(t, \bx) =
    \frac{\mfm}{\alpha!} \lT^{\frac{|\alpha|}{2}}
  \int_{\bbR^3}H_{\alpha}\left(\frac{\bv -
      \lu}{\sqrt{\lT}}\right)f(t, \bx, \bv)\dd \bv,
  \qquad |\alpha| \leqslant M.
\end{equation}
With \eqref{eq:coe}, other interesting moments such as the stress
tensor $\sigma_{ij}$ and the heat flux $q_i$, which are defined by
\begin{align*}
  \sigma_{ij} &= \mfm\int_{\bbR^3} \left( (v_i-u_i)(v_j -u_j) - \frac{1}{3}
    \delta_{ij} |\bv-\bu|^2 \right) f \dd \bv, \quad i,j = 1,2,3, \\
  q_i &= \frac{1}{2}\mfm\int_{\bbR^3}|\bv -\bu|^2(v_i-u_i)f\dd \bv, \qquad i = 1, 2, 3,
\end{align*}
can also be easily related to the first few coefficients as follows:
\begin{align*}
  \sigma_{ij} &= (1+\delta_{ij})\tf_{e_i+e_j}
    +\delta_{ij} \rho(\lT - \theta)
    -\rho(\overline{u}_i - u_i)(\overline{u}_j -u_j), \\
  q_i &= 2\tf_{3e_i}
    + (\overline{u}_i - u_i)\tf_{2e_i}
    + |\lu - \bu|^2\tf_{e_i} \\
  & \quad + \sum_{k=1}^3\left[\tf_{2e_k+e_i}
    + (\overline{u}_k - u_k)\tf_{e_k+e_i}
    + (\overline{u}_i - u_i)\tf_{2e_k} \right].
\end{align*}
The above expressions involve only the parameters $\lu$, $\lT$ and the
coefficients with index norm less than or equal to $3$.

In our numerical method, $f_M(t,\bx,\bv)$ defined in \eqref{eq:f_fM}
will be used as the semi-discrete distribution function. The major
difficulty in the discretization of the equation lies in the collision
operator, which will be discussed in detail in the following sections.

\subsection{Collision kernels}
In order to apply \eqref{eq:f_fM} to the numerical scheme, we first
need to define the collision kernel $B(\cdot, \cdot)$ (see
\eqref{eq:quad_col}). In this paper, we are interested in the
inverse-power-law (IPL) model, for which
\begin{equation}
  \label{eq:IPL}
  B(|\bg|, \chi)
    =\left(\frac{2\kappa}{\mfm}\right)^{\frac{2}{\eta-1}} 
    |\bg|^{\frac{\eta - 5}{\eta-1}} W_0
    \left| \frac{\mathrm{d} W_0}{\mathrm{d} \chi} \right|,
  \quad \eta > 3.
\end{equation}
Here $\eta$ is an index indicating the decay rate of the repulsive
force between gas molecules when their distance increases, and
$\kappa$ is a constant indicating the intensity of the potential. The
angle $\chi$ and dimensionless impact parameter $W_0$ are related by
\begin{equation} \label{eq:chi}
\chi = \pi - 2 \int_0^{W_1} \left[
  1 - W^2 - \frac{2}{\eta - 1} \left( \frac{W}{W_0} \right)^{\eta-1}
\right]^{-1/2} \dd W,
\end{equation}
with $W_1$ being a positive real number satisfying
\begin{equation} \label{eq:W_1}
1-W_1^2 - \frac{2}{\eta - 1} \left( \frac{W_1}{W_0} \right)^{\eta-1} = 0.
\end{equation}
Details about this model can be found in \cite{Bird}. The
inverse-power-law model works well for a wide range of gases around
the room temperature. We refer the readers to \cite{Cowling} for more
details.

Another related model is the variable-hard-sphere (VHS) model, which
is proposed by Bird in \cite{Bird1963} as an approximation of the IPL
model. The collision kernel of the VHS model is
\begin{equation}
  \label{eq:VHS}
  B(|\bg|, \chi) = \frac{1}{4}d_{\rm ref}^2 g_{\rm ref}^{2\nu}
    |\bg|^{1 - 2\nu}\sin\chi, \qquad 0 < \nu \leqslant 1,
\end{equation}
where $d_{\rm ref}$ is the reference molecular diameter and $g_{\rm
ref}$ is the reference speed. When approximating the IPL model with
index $\eta$, the parameter $\nu$ is chosen as $2/(\eta-1)$.



\section{Hermite spectral method for the Boltzmann equation}
\label{sec:ME}
Now we are ready to find the evolution equations for the coefficients
$\tf_{\alpha}$ in \eqref{eq:f_fM}, which contains the discretization
of the convection term and the collision term of the Boltzmann
equation \eqref{eq:Boltzmann}. Based on the idea of Galerkin's method,
we need to expand both the convection term and the collision term with
the same basis functions $\mH_{\alpha}$ as in \eqref{eq:f_fM}. The
major difficulty lies in the collision term, which will be discussed
first below.

\subsection{Series expansions of the IPL collision terms}
\label{sec:general}
In what follows, we will study the series expansion of the quadratic
collision term $Q[f, f]$ defined in \eqref{eq:quad_col}. Precisely
speaking, for any given $\lu$ and $\lT$, the binary collision term
$Q[f, f]$ is to be expanded as
\begin{equation}
  \label{eq:S_expan}
  Q[f, f](t, \bx, \bv) = \sum_{\alpha \in \bbN^3}
  \tQ_{\alpha}(t, \bx)\mH_{\alpha}(\bv).
\end{equation}
By the orthogonality of Hermite polynomials \eqref{eq:Her_orth},
the coefficients can be evaluated by
\begin{equation}
\label{eq:S_k1k2k3}
\tQ_{\indexk}(t, \bx)
= \frac{\mfm}{\alpha!}\lT^{\frac{|\alpha|}{2}}\int_{\bbR^3}
H_{\alpha}\left(\frac{\bv - \lu}{\sqrt{\lT}}\right)Q[f,
f](t,\bx,\bv) \dd \bv.
\end{equation}
Since we are focusing on the collision operator, below in Sections
\ref{sec:general} and \ref{sec:approx}, the variables $t$ and $\bx$
will be temporarily omitted.

In \cite{QuadraticCol}, the authors have proposed an algorithm to find
the values of these coefficients for the dimensionless Boltzmann
collision operator in a special case $\lu = 0$ and $\lT = 1$. Thus, in
order to make use of the result in \cite{QuadraticCol}, we will first
apply the nondimensionalization by defining $\hat{\bv}$ and
$h(\hat{\bv})$ as
\begin{equation}
  \label{eq:changevar}
  \bv = \lu + \sqrt{\lT}{\hat{\bv}}, \qquad
  h(\hat{\bv}) = \frac{\mfm \lT^{3/2}}{\rho}
    f(\lu + \sqrt{\lT} \hat{\bv}),
\end{equation}
where $\rho$ is the density defined in \eqref{eq:macro_var}. From
\eqref{eq:f_fM}, one can derive the series expansion of $h(\hat{\bv})$
as
\begin{equation}
  \label{eq:expansion_sf}
  h(\hat{\bv}) =
  \sum_{\alpha \in \bbN^3}\tilde{h}_{\alpha} \hat{\mH}_{\alpha}(\hat{\bv}),
\end{equation}
where
$\hat{\mH}_{\alpha}(\hat{\bv}) = \mfm\mH_{\alpha}^{[0, 1]}(\hat{\bv})$
is the dimensionless basis function (see \eqref{eq:mH} for the
definition of $\mH_{\alpha}^{[0,1]}$), and the coefficients
\begin{equation}
  \label{eq:tilde_f}
  \tilde{h}_{\alpha} =
    \rho^{-1} \lT^{-\frac{|\alpha|}{2}}\tf_{\alpha},
  \qquad \alpha \in \bbN^3
\end{equation}
are also dimensionless. Using the above definitions, the collision
term $Q[f,f]$ \eqref{eq:quad_col} changes to
\begin{equation}
  \label{eq:scale_Q}
  \begin{split}
    &Q[f, f](\lu + \sqrt{\lT}\hat{\bv}) =  \\
    & \qquad \frac{\rho^2}{\mfm^2\lT^{\frac{3}{2}}}
    \int_{\mathbb{R}^3} \int_{\bn \perp \hat{\bg}}
    \int_0^{\pi} B\left(\sqrt{\lT}|\hat{\bg}|,\chi\right)
    \left[h(\hat{\bv}_1') h(\hat{\bv}') -
      h(\hat{\bv}_1)h(
      \hat{\bv})\right] \dd\chi \dd\bn \dd\hat{\bv}_1,
\end{split}
\end{equation}
where $\hat{\bg} = \hat{\bv} - \hat{\bv}_1$. Specifically, for the
IPL model, it is convenient to define the dimensionless collision
kernel $\hat{B}$ by
\begin{equation}
  \hat{B}(|\hat{\bg}|, \chi) =
    |\hat{\bg}|^{\frac{\eta-5}{\eta-1}} W_0
    \left| \frac{\mathrm{d}W_0}{\mathrm{d}\chi} \right|,
\end{equation} 
and then the IPL collision term turns out to be
\begin{equation}
  \label{eq:scale_Q_IPL}
  Q[f,f](\lu + \sqrt{\lT}\hat{\bv}) =
  \frac{\rho^2}{\mfm^2\lT^{\frac{3}{2}}}
  \left(\frac{2\kappa}{\mfm}\right)^{\frac{2}{\eta-1}}\lT^{\frac{\eta-5}{2(\eta-1)}}
  \hat{Q}[h, h](\hat{\bv}),
\end{equation}
where $\hat{Q}[h, h]$ is the dimensionless collision operator
\begin{equation}
  \label{eq:scale_IPL}
  \hat{Q}[h, h](\hat{\bv}) = \int_{\mathbb{R}^3} \int_{\bn \perp \hat{\bg}}
  \int_0^{\pi} \hat{B}\left(|\hat{\bg}|,\chi\right)
  \left[h(\hat{\bv}_1') h(\hat{\bv}') -
    h(\hat{\bv}_1)h(
    \hat{\bv})\right] \dd\chi \dd\bn \dd\hat{\bv}_1.
\end{equation}
Inserting \eqref{eq:scale_Q_IPL} into \eqref{eq:S_k1k2k3}, we obtain
\begin{equation}
  \label{eq:IPL_Qk}
  \tQ_{\alpha}
  = \frac{\rho^2}{\mfm} 
  \left(\frac{2\kappa}{\mfm}\right)^{\frac{2}{\eta-1}}\lT^{\frac{\eta-5}{2(\eta-1)}
  + \frac{|\alpha|}{2}}
  \frac{1}{\alpha!}\int_{\bbR^3} H_{\alpha}(\hat{\bv})\hat{Q}[h, h](\hat{\bv})
  \dd \hat{\bv}, \qquad \alpha \in \bbN^3.
\end{equation}
For the IPL model, the integral in \eqref{eq:IPL_Qk} has been deeply
studied in \cite{QuadraticCol}. The general result is
\begin{equation}
  \label{eq:expansion_Q}
\frac{1}{\alpha!}\int_{\bbR^3} H_{\alpha}(\hat{\bv})\hat{Q}[h, h](\hat{\bv})
  \dd \hat{\bv} = \sum_{\beta \in\bbN^3} \sum_{\gamma \in \bbN^3}
  A_{\alpha}^{\beta, \gamma} \tilde{h}_{\beta}\tilde{h}_{\gamma},
  \qquad \alpha \in \bbN^3.
\end{equation}
The coefficients $A_{\alpha}^{\beta, \gamma}$ are constants for a
given collision model, and an algorithm to compute these coefficients
is given \cite{QuadraticCol} for all IPL models. The algorithm
uses an explicit expression of $A_{\alpha}^{\beta,\gamma}$ which
involves only a one-dimensional integral, and this integral is
evaluated by adaptive numerical integration. Such an algorithm can
provide very accurate values for these coefficients, and has been
verified to be reliable in the computation of homogeneous Boltzmann
equation. Due to the lengthy expressions involved in the algorithm, we
are not going to repeat the details in this paper. We would just like
to mention that in order to find the values of all
$A_{\alpha}^{\beta,\gamma}$ with $|\alpha|,|\beta|,|\gamma| \leqslant
M$, the computational cost is proportional to $M^{12}$. Although the
time complexity is high, all these coefficients can be pre-computed
and stored. Readers are referred to \cite{QuadraticCol} for the
details of the algorithm.

Substituting \eqref{eq:tilde_f} and \eqref{eq:expansion_Q} into
\eqref{eq:IPL_Qk}, we finally get
\begin{equation}
  \label{eq:expansion_IPL}
  \tQ_{\alpha} = \frac{1}{\mfm} 
  \left(\frac{2\kappa}{\mfm}\right)^{\frac{2}{\eta-1}}
  \lT^{\frac{\eta-5}{2(\eta-1)}} \sum_{\indexi \in \bbN^3}
  \sum_{\indexj \in \bbN^3} \lT^{\frac{1}{2}(|\alpha| - |\beta|
    -|\gamma|)}A_{\alpha}^{\beta, \gamma}\tf_{\beta}\tf_{\gamma},
  \qquad \alpha \in \bbN^3.
\end{equation}
As mentioned at the beginning of Section \ref{sec:general}, the
parameters $\lu$ and $\lT$, which affects the coefficients
$\tf_{\beta}$ and $\tf_{\gamma}$ implicitly, can be arbitrarily
chosen. A special choice is $\lu = \bu$ and $\lT = \theta$ (see
\eqref{eq:macro_var}\eqref{eq:u_T} and \eqref{eq:theta} for the
definitions), which leads to
\begin{equation}
  \label{eq:expansion_IPL_std}
  \tQ^{[\bu,\theta]}_{\alpha} = 
  \frac{c \theta}{\mu}\sum_{\beta \in \bbN^3}
  \sum_{\gamma \in \bbN^3} \theta^{\frac{1}{2}(|\alpha| - |\beta|
    -|\gamma|)}A_{\alpha}^{\beta, \gamma}
  \tf^{[\bu,\theta]}_{\beta}\tf^{[\bu,\theta]}_{\gamma},
  \qquad \alpha \in \bbN^3,
\end{equation}
where $\mu$ is the viscosity coefficient (see \cite[eq. (3.62)]{Bird})
\begin{equation}
\mu = \frac{5\mfm (\theta/\pi)^{1/2}(2\mfm\theta/\kappa)^{2/(\eta-1)}}
  {8 A_2(\eta) \Gamma(4-2(\eta-1))}, \qquad
A_2(\eta) = \int_0^{+\infty} W_0 \sin^2 \chi \dd W_0,
\end{equation}
and $c$ is a constant given by%
\footnote{When $\lu = \bu$ and $\lT = \theta$, the expansion
\eqref{eq:expansion} is identical to the one proposed by Grad in
\cite{Grad}, where the expansion of the collision term is also
considered. For example, the equation (A3.56) is
\begin{displaymath}
J_{ij}^{(2)} = -\frac{6}{m} B_1^{(2)} \rho a_{ij}^{(2)} + \cdots.
\end{displaymath}
When $i = 1$ and $j = 2$, it can be translated to our language:
\begin{displaymath}
\tQ_{\varsigma}^{[\bu,\theta]} =
  -\frac{6}{m} B_1^{(2)} \rho \tf_{\varsigma}^{[\bu,\theta]} + \cdots,
\end{displaymath}
by using $\tQ_{\varsigma}^{[\bu,\theta]} = \rho \theta J_{12}^{(2)}$
and $\tf_{\varsigma}^{[\bu,\theta]} = \rho \theta a_{12}^{(2)}$.
Comparing this equation with \eqref{eq:expansion_IPL}, we find
\begin{displaymath}
\frac{1}{m} \left( \frac{2\kappa}{m} \right)^{\frac{2}{\eta-1}}
  \theta^{\frac{\eta-5}{2(\eta-1)}}
  (A_{\varsigma}^{0,\varsigma} + A_{\varsigma}^{\varsigma,0}) =
-\frac{6}{m} B_1^{(2)}.
\end{displaymath}
By $\mu = \frac{m\theta}{6B_1^{(2)}}$ (the equation (5.30) in
\cite{Grad}), we obtain the coefficients in front of the sums in
\eqref{eq:expansion_IPL_std}.}%
\begin{equation}
c = -\left( A_{\varsigma}^{0,\varsigma} + A_{\varsigma}^{\varsigma,0}
  \right)^{-1}, \qquad \varsigma = (1,1,0).
\end{equation}
Such a special case will be used in the next section when we reduce
the computational cost by simplifying the collision term.

\subsection{Approximation to the Boltzmann collision term}
\label{sec:approx}
The previous section establishes the basic theory for discretization
of the collision term. By Galerkin's method, the equations for
$\tf_{\alpha}$ should hold the form
\begin{equation}
\pd{\tf_{\alpha}}{t} + \cdots = \tQ_{\alpha},
  \qquad |\alpha| \leqslant M,
\end{equation}
where $\cdots$ denotes the corresponding convection term to be
discussed in Section \ref{sec:eqs}, and $\tQ_{\alpha}$ is given in
\eqref{eq:S_k1k2k3} with $f$ replaced by $f_M$ (see \eqref{eq:f_fM}).
By \eqref{eq:expansion_IPL}, it is known that the total computational
cost for evaluating all $\tQ_{\alpha}$ with $|\alpha| \leqslant M$ is
$O(M^9)$, which is unacceptable for a large $M$, especially for
spatially inhomogeneous problems. The aim of this section is to build 
new collision models and derive the corresponding $\tQ_{\alpha}$
following the method proposed in \cite{QuadraticCol}.

A strategy to reduce the computational cost has been proposed in
\cite{QuadraticCol} based on the dimensionless and normalized
settings. To demonstrate the result, we consider again the
dimensionless distribution function $h(\hat{\bv})$ defined in the
previous section. When $h(\hat{\bv})$ satisfies
\begin{equation} \label{eq:h}
\int_{\bbR^3} \hat{\bv} h(\hat{\bv}) \dd \hat{\bv} = 0, \qquad
\frac{1}{3} \int_{\bbR^3} |\hat{\bv}|^2 h(\hat{\bv}) \dd \hat{\bv} = 1,
\end{equation}
the dimensionless collision operator $\hat{Q}[h,h]$ is approximated by
\begin{equation} \label{eq:Qstar}
\hat{Q}^*[h,h](\hat{\bv}) = \sum_{|\alpha| \leqslant M_0}
  \sum_{|\beta| \leqslant M_0} \sum_{|\gamma| \leqslant M_0}
    A_{\alpha}^{\beta,\gamma} \tilde{h}_{\beta} \tilde{h}_{\gamma}
      \hat{\mH}_{\alpha}(\hat{\bv}) -
  \sum_{|\alpha| > M_0} \nu_{M_0} \tilde{h}_{\alpha}
    \hat{\mH}_{\alpha}(\hat{\bv}),
\end{equation}
where $M_0$ is an arbitrarily chosen positive integer, and $\nu_{M_0}$
gives the decay rate of the higher-order coefficients. The idea is to
apply the quadratic collision operator only to the first few
coefficients, and for the remaining coefficients, we adopt the idea of
the BGK-type operator and simply let it decay to zero exponentially at
a constant rate. Unfortunately, the conditions \eqref{eq:h} do not
hold in general. By \eqref{eq:changevar}, it can be found that only in
the special case $\lu = \bu$, $\lT = \theta$, the equalities
\eqref{eq:h} are true, and thus we can recover the dimensions from
\eqref{eq:Qstar} and get the following approximation of $Q[f,f]$:
\begin{equation} \label{eq:Qstar_f}
\begin{split}
Q^*[f,f](\bv) &= \frac{c\theta}{\mu} \sum_{|\alpha| \leqslant M_0}
  \sum_{|\beta| \leqslant M_0} \sum_{|\gamma| \leqslant M_0}
    \theta^{\frac{1}{2}(|\alpha| - |\beta| - |\gamma|)}
    A_{\alpha}^{\beta,\gamma}
    \tf^{[\bu,\theta]}_{\beta} \tf^{[\bu,\theta]}_{\gamma}
      \mH_{\alpha}^{[\bu,\theta]}(\bv) \\
&- \frac{c\theta}{\mu} \sum_{|\alpha| > M_0} \nu_{M_0}
    \rho \tf^{[\bu,\theta]}_{\alpha}
    \mH^{[\bu,\theta]}_{\alpha}(\bv).
\end{split}
\end{equation}
When $\lu = \bu$ or $\lT = \theta$ does not hold, we cannot use the
same way to construct the approximate collision operators, i.e. we
cannot just remove the superscripts $[\bu,\theta]$ in
\eqref{eq:Qstar_f}, since the resulting operator has no guarantee that
it will vanish for Maxwellians, which is a fundamental property of the
Boltzmann equation.

To overcome such a difficulty, we choose to evaluate the coefficients
$\tf^{[\bu,\theta]}_{\alpha}$ based on the knowledge of all the
coefficients $\tf_{\alpha}$, and then \eqref{eq:Qstar_f} can be
applied. The algorithm to obtain $\tf^{[\bu,\theta]}_{\alpha}$ is
inspired by the method proposed in \cite{Qiao}, and can be stated
by the following theorem:
\begin{theorem} \label{thm:projection}
Suppose the function $\phi(\bv)$ satisfies
\begin{equation} \label{eq:fin}
\int_{\bbR^3} (1 + |\bv|^M) |\phi(\bv)| \dd \bv < +\infty,
\end{equation}
for some positive integer $M$. Given $\bw, \bw^* \in \bbR^3$ and $\eta,
\eta^* > 0$, for any $\alpha \in \bbN^3$ satisfying $|\alpha|
\leqslant M$, define
\begin{equation}
\tilde{\phi}_{\alpha} = \frac{1}{\alpha!} \eta^{\frac{|\alpha|}{2}}
  \int_{\bbR^3} H_{\alpha} \left(
    \frac{\bv - \bw}{\sqrt{\eta}}
  \right) \phi(\bv) \dd \bv, \quad
\tilde{\phi}^*_{\alpha} = \frac{1}{\alpha!} (\eta^*)^{\frac{|\alpha|}{2}}
  \int_{\bbR^3} H_{\alpha} \left(
    \frac{\bv - \bw^*}{\sqrt{\eta^*}}
  \right) \phi(\bv) \dd \bv.
\end{equation}
Then
\begin{equation} \label{eq:phi_star}
\tilde{\phi}^*_{\alpha} =
  \sum_{k=0}^{|\alpha|} \tilde{\phi}_{\alpha}^{(k)},
\end{equation}
where $\tilde{\phi}_{\alpha}^{(k)}$ is recursively defined by
\begin{equation}
  \label{eq:sol_odes1}
  \tilde{\phi}_{\alpha}^{(k)} = \left\{ \begin{array}{ll}
    \tilde{\phi}_{\alpha}, & \text{if } k = 0, \\[5pt]
    \displaystyle \frac{1}{k}\sum_{j=1}^3\left( (w_d^* - w_d)
      \tilde{\phi}_{\alpha-e_j}^{(k-1)} + \frac{1}{2} (\eta^* - \eta)
      \tilde{\phi}_{\alpha-2e_j}^{(k-1)}\right),
    & \text{if } 1 \leqslant k \leqslant |\alpha|.
  \end{array} \right.
\end{equation}
Here the terms with negative values in the subscript indices are
regarded as zero.
\end{theorem}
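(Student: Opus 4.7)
The plan is to prove the identity by introducing the exponential generating function of the Hermite moments, which converts the multi-index recursion into a single factorization of two exponentials. First I would recall the univariate generating identity $\sum_{n\ge 0} s^n H_n(x)/n!=e^{sx-s^2/2}$ (which follows directly from the Rodrigues formula), and apply it componentwise with $s=t_d\sqrt{\eta}$ to obtain
\[
\sum_{\alpha\in\bbN^3}\frac{\boldsymbol{t}^\alpha \eta^{|\alpha|/2}}{\alpha!}\,H_\alpha\!\left(\frac{\bv-\bw}{\sqrt{\eta}}\right)
=\exp\!\left(\boldsymbol{t}\cdot(\bv-\bw)-\tfrac{1}{2}\eta|\boldsymbol{t}|^2\right).
\]
Multiplying by $\phi(\bv)$ and integrating then produces the moment generating series
\[
\Phi(\boldsymbol{t};\bw,\eta):=\sum_{\alpha\in\bbN^3}\boldsymbol{t}^\alpha\tilde{\phi}_\alpha=\int_{\bbR^3}e^{\boldsymbol{t}\cdot(\bv-\bw)-\eta|\boldsymbol{t}|^2/2}\phi(\bv)\dd\bv,
\]
together with an analogous identity for $\tilde{\phi}^*_\alpha$. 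The integrability hypothesis \eqref{eq:fin} is exactly what is needed so that, for indices $|\alpha|\le M$, the exponential can be expanded up to order $M$ and integrated term-by-term, avoiding any convergence issue at this finite order.

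The key observation will then be the factorization
\[
\Phi(\boldsymbol{t};\bw^*,\eta^*)=\Phi(\boldsymbol{t};\bw,\eta)\cdot e^{A(\boldsymbol{t})},\qquad A(\boldsymbol{t}):=\boldsymbol{t}\cdot(\bw-\bw^*)-\tfrac{1}{2}(\eta^*-\eta)|\boldsymbol{t}|^2,
\]
obtained by splitting the exponent. Expanding $e^{A}=\sum_{k\ge 0}A^k/k!$ and setting $\Phi^{(k)}(\boldsymbol{t}):=\Phi(\boldsymbol{t};\bw,\eta)\,A(\boldsymbol{t})^k/k!$ gives $\Phi(\boldsymbol{t};\bw^*,\eta^*)=\sum_{k\ge 0}\Phi^{(k)}(\boldsymbol{t})$. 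Defining $\tilde{\phi}^{(k)}_\alpha$ as the coefficient of $\boldsymbol{t}^\alpha$ in $\Phi^{(k)}(\boldsymbol{t})$, extracting the coefficient of $\boldsymbol{t}^\alpha$ on both sides immediately yields the decomposition \eqref{eq:phi_star}. The sum truncates at $k=|\alpha|$ because $A(\boldsymbol{t})^k$ contributes only monomials of total $\boldsymbol{t}$-degree at least $k$.

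The recursion \eqref{eq:sol_odes1} then falls out of the trivial relation $k\,\Phi^{(k)}=A\,\Phi^{(k-1)}$: extracting the coefficient of $\boldsymbol{t}^\alpha$ on the right-hand side turns multiplication by $t_j$ into the index shift $\alpha\mapsto\alpha-e_j$ and by $t_j^2$ into $\alpha\mapsto\alpha-2e_j$, with indices having any negative component contributing zero. Matching these shifts against the two monomial families in $A(\boldsymbol{t})$ reproduces the stated recursion. The only step that I expect to require careful bookkeeping is tracking the signs and the factor $1/2$ consistently through the coefficient extraction; everything else is a routine power-series manipulation, and \eqref{eq:fin} is invoked only to ensure that the individual integrals defining $\tilde{\phi}_\alpha$ and $\tilde{\phi}^*_\alpha$ are well-defined for $|\alpha|\le M$.
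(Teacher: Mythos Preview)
Your generating-function argument is valid and takes a genuinely different route from the paper. The paper instead introduces a linear homotopy $\bw(\tau)=(1-\tau)\bw+\tau\bw^*$, $\eta(\tau)=(1-\tau)\eta+\tau\eta^*$, differentiates $\tilde\phi_\alpha(\tau)$ with respect to $\tau$ to obtain a triangular linear ODE system, and then verifies that its solution is the polynomial $\sum_k\tilde\phi_\alpha^{(k)}\tau^k$; setting $\tau=1$ yields \eqref{eq:phi_star}. Your method trades this differential step for the algebraic factorization $\Phi^*=\Phi\,e^{A}$, which makes the truncation at $k=|\alpha|$ immediate by degree counting. The two arguments are really the same computation in different clothing: at the level of generating functions the paper's ODE is $\partial_\tau\Phi=A\,\Phi$, whose solution $e^{\tau A}\Phi(0)$ evaluated at $\tau=1$ is precisely your identity. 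The paper's route avoids invoking the Hermite generating function as an input; yours avoids an ODE and is arguably more direct.

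One point on the bookkeeping you already flag: with your (correct) $A(\boldsymbol{t})=\boldsymbol{t}\cdot(\bw-\bw^*)-\tfrac12(\eta^*-\eta)|\boldsymbol{t}|^2$, extracting coefficients from $k\,\Phi^{(k)}=A\,\Phi^{(k-1)}$ produces the recursion with \emph{both} signs reversed relative to \eqref{eq:sol_odes1}. The one-line check
\[
\tilde\phi_{e_1}^*=\int_{\bbR^3}(v_1-w_1^*)\,\phi\dd\bv=\tilde\phi_{e_1}+(w_1-w_1^*)\tilde\phi_0
\]
shows that your signs are the right ones and that the displayed recursion carries a sign typo, so do not be alarmed when the final matching does not come out exactly as stated.
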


\begin{proof}
For $\tau \in [0,1]$, define the functions
\begin{equation}
\bw(\tau) = (1-\tau) \bw + \tau \bw^*, \qquad
\eta(\tau) = (1-\tau) \eta + \tau \eta^*,
\end{equation}
and
\begin{equation} \label{eq:phi_tau}
\tilde{\phi}_{\alpha}(\tau) = \frac{1}{\alpha!}
  [\eta(\tau)]^{\frac{|\alpha|}{2}}
  \int_{\bbR^3} H_{\alpha} \left(
    \frac{\bv - \bw(\tau)}{\sqrt{\eta(\tau)}}
  \right) \phi(\bv) \dd \bv, \qquad |\alpha| \leqslant M.
\end{equation}
The condition \eqref{eq:fin} ensures that
$\tilde{\phi}_{\alpha}(\tau)$ exists for any $\tau \in [0,1]$.
Especially, we have $\tilde{\phi}_{\alpha}(0) = \tilde{\phi}_{\alpha}$
and $\tilde{\phi}_{\alpha}(1) = \tilde{\phi}^*_{\alpha}$. Now we take
the derivative of \eqref{eq:phi_tau} with respect to $\tau$. By
straightforward calculation, we obtain
\begin{equation} \label{eq:odes}
  \frac{\mathrm{d} \tilde{\phi}_{\alpha}(\tau)}{\mathrm{d}\tau} =
  \sum_{d=1}^3 \left[
    (w^*_d -  w_d) \tilde{\phi}_{\alpha-e_d}(\tau) +
    \frac{1}{2} (\eta^* - \eta) \tilde{\phi}_{\alpha - 2e_d}(\tau)
  \right], \qquad |\alpha| \leqslant M.
\end{equation}
Considering the initial value $\tilde{\phi}_{\alpha}(0) =
\tilde{\phi}_{\alpha}$, we claim that the solution of this ODE system
is
\begin{equation}
  \label{eq:sol_odes}
  \tilde{\phi}_{\alpha}(\tau)=
    \sum_{k=0}^{|\alpha|}\tilde{\phi}_{\alpha}^{(k)}\tau^k,
  \qquad |\alpha|\leqslant M,
\end{equation}
where $\tilde{\phi}_{\alpha}^{(k)}$ is defined in
\eqref{eq:sol_odes1}. The verification of this claim is simply a direct
substitution of \eqref{eq:sol_odes1} into \eqref{eq:odes}, and the
details are omitted. Setting $\tau = 1$ in \eqref{eq:sol_odes} and
using $\tilde{\phi}_{\alpha}(1) = \tilde{\phi}^*_{\alpha}$, one
completes the proof of \eqref{eq:phi_star}.
\end{proof}

Theorem \ref{thm:projection} provides an algorithm to obtain
$\tf_{\alpha}^{[\bu,\theta]}$ from $\tf_{\alpha}$. In detail, we let
\begin{equation}
\bw = \lu, \quad \eta = \lT, \quad \bw^* = \bu, \quad \eta^* = \theta.
\end{equation}
Then by \eqref{eq:phi_star} and \eqref{eq:sol_odes1}, it can be seen
that $\tf_{\alpha}^{[\bu,\theta]}$ can be represented by a linear
combination of $\tf_{\beta}$ with $|\beta| \leqslant |\alpha|$.
Similarly, if we let
$\bw = \bu, ~\eta = \theta, ~\bw^* = \lu, ~\eta^* = \lT$, the same
technique can help us to find $\tQ_{\alpha}^*$ defined by
\begin{equation}
\tQ^*_{\alpha} = \frac{1}{\alpha!} \lT^{\frac{|\alpha|}{2}}
  \int_{\bbR^3} H_{\alpha} \left(
    \frac{\bv - \lu}{\sqrt{\lT}}
  \right) Q^*[f,f](\bv) \dd \bv,
\end{equation}
where $Q^*[f,f]$ is defined in \eqref{eq:Qstar_f}. Thus our new
collision model can be written as
\begin{equation}
Q^*[f,f](\bv) = \sum_{\alpha \in \bbN^3}
  \tQ^*_{\alpha} \mH_{\alpha}(\bv),
\end{equation}
where the map from $\tf_{\alpha}$ to $\tQ^*_{\alpha}$ can be
summarized as follows:
\begin{equation} \label{eq:map}
\tf_{\alpha} \xrightarrow{\text{Theorem \ref{thm:projection}}}
  \tf_{\alpha}^{[\bu,\theta]} \longrightarrow \eqref{eq:Qstar_f}
  \xrightarrow{\text{Theorem \ref{thm:projection}}} \tQ^*_{\alpha}.
\end{equation}

Before closing this section, we would like to mention that the choice
of the constant $\nu_{M_0}$ in \eqref{eq:Qstar} and \eqref{eq:Qstar_f}
should probably be determined by further numerical studies. Currently,
we adopt the choice in \cite{Cai2015,QuadraticCol} and set $\nu_{M_0}$
to be the spectral radius of the operator $\hat{L}_{M_0}: F_{M_0}(0,1)
\rightarrow F_{M_0}(0,1)$, whose definition is
\begin{equation}\label{eq:linearized-col}
\hat{L}_{M_0}[h](\hat{\bv}) =
  \sum_{|\alpha| \leqslant M_0} \sum_{|\beta| \leqslant M_0}
    (A_{\alpha}^{0,\beta} + A_{\alpha}^{\beta,0}) \tilde{h}_{\beta}
    \hat{\mH}_{\alpha}(\bv),
\end{equation}
which is in fact the linearization of the quadratic operator
$\hat{Q}^*$ restricted on $F_{M_0}(0,1)$. We refer the readers to
\cite{Cai2015,QuadraticCol} for more details.

\subsection{Hermite spectral method for the Boltzmann equation with
approximate collision term}
\label{sec:eqs}
Having derived the approximate collision operator $Q^*[f, f]$ in the
previous subsection, we are ready to write down the equations for the
coefficients $\tf_{\alpha}(t,\bx)$ in \eqref{eq:f_fM}. By Galerkin's
method, the equations are obtained by the following equalities:
\begin{equation}
\frac{\mfm}{\alpha!} \lT^{\frac{|\alpha|}{2}}
  \int_{\bbR^3} H_{\alpha}(\hat{\bv})
  \left[
    \frac{\partial f_M}{\partial t} + \nabla_{\bx} \cdot (\bv f_M)
  \right] \dd \bv =
\frac{\mfm}{\alpha!} \lT^{\frac{|\alpha|}{2}} \int_{\bbR^3}
  H_{\alpha}(\hat{\bv}) Q^*[f_M, f_M](\bv) \dd \bv,
\end{equation}
where $|\alpha| \leqslant M$ and $\hat{\bv}$ is defined in
\eqref{eq:changevar}. To deal with the convection term, we need the
recursion relation of the basis function $\mH_{\alpha}$:
\begin{equation}
v_j \mH_{\alpha}(\bv) = \alpha_j \mH_{\alpha-e_j}(\bv) +
  \overline{u}_j \mH_{\alpha}(\bv) + \lT \mH_{\alpha+e_j}(\bv).
\end{equation}
Thus by orthogonality of Hermite polynomials, we obtain the following
evolution equations for $\tf_{\alpha}$:
\begin{equation}
  \label{eq:moment_sys}
  \begin{aligned}
    \pdd{t}\tf_{\alpha} + \sum_{j=1}^3
    \pdd{x_j}\left((\alpha_j+1) \tf_{\indexk + e_j} +
      \overline{u}_j\tf_{\indexk} + 
      \lT\tf_{\indexk - e_j}\right)  = \tQ^*_{\alpha},
  \qquad |\alpha| \leqslant M.
  \end{aligned}
\end{equation}
where $\tf_{\beta}$ is regarded as zero if $\beta$ contains negative
indices or $|\beta| > M$. We remind the readers again that the
right-hand side of \eqref{eq:moment_sys} is a function of all
$\tf_{\alpha}$ by \eqref{eq:map}, which shows that the computation of
$\tQ^*_{\alpha}$ includes two parts:
\begin{enumerate}
\item Application of algorithm implied in Theorem \ref{thm:projection}
  (the first and third arrows in \eqref{eq:map}), whose time
  complexity is $O(M^4)$ (see \eqref{eq:phi_star} and
  \eqref{eq:sol_odes1});
\item Evaluation of all the coefficients in \eqref{eq:Qstar_f} (the
  second arrow in \eqref{eq:map}), whose time complexity is $O(M_0^9 +
  M^3)$.
\end{enumerate}
Therefore, the total time complexity for computing all
$\tQ^*_{\alpha}$ with $|\alpha| \leqslant M$ is $O(M_0^9 + M^4)$.

To complete the problem, we need to supplement \eqref{eq:moment_sys}
with initial and boundary conditions. Suppose the initial condition
for the original Boltzmann equation is $f(0,\bx,\bv) = f_0(\bx,\bv)$.
Then a natural initial condition for \eqref{eq:moment_sys} is
\begin{equation}
\tf_{\alpha}(0,\bx) = 
  \frac{\mfm}{\alpha!} \lT^{\frac{|\alpha|}{2}} \int_{\bbR^3}
    H_{\alpha} \left( \frac{\bv - \lu}{\sqrt{\lT}} \right)
    f_0(\bx,\bv) \dd \bv.
\end{equation}
The boundary condition, especially the solid wall boundary condition,
is slightly more complicated, and we will discuss this topic in
the next section.

\section{Boundary condition} \label{sec:bc}
Due to the hyperbolic nature of the Boltzmann equation, on the
boundary of the spatial domain, we need to specify the value of the
distribution function with velocity pointing into the domain. In the
simulation of microflows, the wall boundary condition is especially
important. In this paper, we focus on a popular type of boundary
condition proposed by Maxwell in \cite{Maxwell}, which is a linear
combination of the specular reflection and the diffuse reflection.
Such a boundary condition has been studied for very similar methods in
\cite{Cai2012, Microflows1D}, which make our work much easier. Below,
we are going to first review the Maxwell boundary condition, and then
propose the boundary condition for the Hermite spectral method.

\subsection{The Maxwell boundary condition for the Boltzmann equation}
Suppose $\bx_0 \in \partial \Omega$. Let $\bn_0$ be the outer unit
normal vector of the spatial domain $\Omega$ at $\bx_0$. Consider the
case in which $\bx_0$ is the contact point of the gas and the solid
wall. At point $\bx_0$, the solid wall has temperature $T^w$, and is
moving at velocity $\bu^w$. By these assumptions, the Maxwell boundary
condition is described as follows:
\begin{equation}
  \label{eq:maxwell_boundary}
  f(t, \bx_0, \bv) = 
    \omega f_{\mM}^w(t, \bx_0, \bv) + (1 - \omega)f(t,
    \bx_0, \bv^{\ast}), \qquad \text{if } (\bv - \bu^w) \cdot \bn_0 < 0,
\end{equation}
where $\omega \in [0, 1]$ is the accommodation coefficient of the wall,
and $f_{\mM}^w$ and $\bv^{\ast}$ are defined as 
\begin{equation}
  \label{eq:fwall}
  f_{\mM}^w(t, \bx_0, \bv) = \rho^w\mM_{\bu^w, \theta^w}(\bv), \qquad
  \bv^{\ast} = \bv - 2[(\bv - \bu^w)\cdot \bn_0]\bn_0. 
\end{equation}
In \eqref{eq:fwall}, $\rho^w$ should be determined by the condition
that the normal mass flux on the boundary is zero, that is 
\begin{equation}
  \label{eq:mass_conser}
  \int_{(\bv - \bu^w) \cdot \bn_0 < 0} \left[(\bv - \bu^w) \cdot
    \bn_0\right]
  \left[f_{\mM}^w(t, \bx_0, \bv) - f(t, \bx_0, \bv^{\ast})\right] \dd \bv = 0.
\end{equation}
The boundary condition for Hermite spectral method should be an
approximation of the above boundary condition.

\subsection{Boundary condition for the Hermite spectral method}
The most natural idea to find the boundary conditions for
\eqref{eq:moment_sys} is to integrate the Maxwell boundary condition
\eqref{eq:maxwell_boundary} against Hermite polynomials. However, if
all the Hermite polynomials of degree less than or equal to $M$ are
taken into account, the resulting number of boundary conditions will
generally be larger than the number required by the hyperbolicity. In
general, for a hyperbolic system, the number of boundary conditions at
$\bx_0 \in \Omega$ should be equal to the number of characteristics
pointing into the domain $\Omega$. Below we will first find all the
characteristic speeds of the system.

For clarification purposes, we rewrite \eqref{eq:moment_sys} in the
matrix-vector form:
\begin{equation}
    \label{eq:moment-sys-matrix}
\pd{\bdf}{t} + \sum_{j=1}^3 {\bf A}_j \pd{\bdf}{x_j} =
  \boldsymbol{Q}(\bdf),
\end{equation}
where $\bdf$ is a column vector with all the unknowns $\tf_{\alpha}$,
$|\alpha| \leqslant M$ as its components, and ${\bf A}_j$ and
$\boldsymbol{Q}$ are respectively defined by the convection and
collision terms in \eqref{eq:moment_sys}.

We first consider the case $\bn_0 = (1, 0, 0)^T$, in which it is only
necessary to find all the eigenvalues of ${\bf A}_1$. The matrix ${\bf
A}_1$ is in fact a reducible matrix, which can be observed if we
divide $\bdf$ into the following subvectors:
\begin{equation}
\bdf_{\alpha'} = \left(
  \tf_{0,\alpha'}, \tf_{1,\alpha'}, \cdots, \tf_{M-|\alpha'|,\alpha'}
\right)^T, \qquad
  \alpha' = (\alpha_1', \alpha_2') \in \bbN^2, \quad
  |\alpha'| \leqslant M.
\end{equation}
Here the notation $\tf_{k,\alpha'}$ designates the coefficient
$\tf_{\alpha}$ with $\alpha = (k, \alpha_1', \alpha_2')$. Apparently,
the vector $\bdf$ can be formed by gluing up $\bdf_{\alpha'}$ for all 
$\alpha' \in \bbN^2$. Thus by \eqref{eq:moment_sys}, one can find that
${\bf A}_1$ has a block-diagonal structure, and each block has a
tridiagonal form
\begin{equation}
{\bf A}_{1\alpha'} = \begin{pmatrix}
  \overline{u}_1 & 1 \\
  \overline{\theta} & \overline{u}_1 & 2 \\
  & \overline{\theta} & \overline{u}_1 & 3 \\
  & & \ddots & \ddots & \ddots \\
  & & & \overline{\theta} & \overline{u}_1 & M-|\alpha'| \\
  & & & & \overline{\theta} & \overline{u}_1
\end{pmatrix}, \qquad
  \alpha' \in \bbN^2, \qquad |\alpha'| \leqslant M.
\end{equation}
All the eigenvalues of the above matrix has been given in
\cite{Cai2015Framework} as\footnote{In \cite{Cai2015Framework}, such a
matrix is denoted as ${\bf M}(\overline{u}_1, \overline{\theta})$. It
is shown in \cite{Cai2015Framework} that this matrix is similar to a
diagonal matrix called ${\bf \Lambda}(\bw)$, whose diagonal entries
are exactly the numbers given in \eqref{eq:ev}.}
\begin{equation} \label{eq:ev}
\lambda({\bf A}_{1\alpha'}) = \left\{
  \overline{u}_1 + c_0 \sqrt{\lT},
  \quad \overline{u}_1 + c_1 \sqrt{\lT},
  \quad \cdots, \quad \overline{u}_1 + c_{M-|\alpha'|} \sqrt{\lT}
\right\},
\end{equation}
where $c_0, \cdots, c_{M-|\alpha'|}$ are all the roots of the
one-dimensional Hermite polynomial of degree $M+1-|\alpha'|$.
Consequently, all the eigenvalues of ${\bf A}_1$ are given by
\begin{equation}
\lambda({\bf A}_1) =
  \bigcup_{\substack{\alpha' \in \bbN^2\\ |\alpha'| \leqslant M}}
  \lambda({\bf A}_{1\alpha'}).
\end{equation}

When $\bn_0 = (1,0,0)^T$, the number of boundary conditions at $\bx_0$
should equal the number of eigenvalues less than $u_1^w$. In general,
this number varies with $\overline{u}_1$ and $\lT$, which makes it
difficult to discuss the boundary conditions in the general
setting. As a workaround, we assume that $\lu$ is chosen such that
$\overline{u}_1 = u_1^w$. Then, by the symmetry of the Hermite
polynomials, the number of boundary conditions to be specified at
$\bx_0$ is
\begin{equation} \label{eq:number_bc}
\sum_{\substack{\alpha' \in \bbN^2\\ |\alpha'| \leqslant M}}
  \left\lceil \frac{M-|\alpha'|}{2} \right\rceil.
\end{equation}
In \cite{Microflows1D}, it is proven that the number
\eqref{eq:number_bc} equals the number of indices in the following
index set:
\begin{equation}
\mathcal{A} = \{ \alpha \in \bbN^3 \mid
  |\alpha| \leqslant M, \ \alpha_1 \text{ is odd} \}.
\end{equation}
Therefore, as stated in the beginning of this section, all the
boundary conditions can be formulated by
\begin{equation} \label{eq:bc}
\begin{split}
& \frac{m}{\alpha!} \lT^{\frac{|\alpha|}{2}}
  \int_{(\bv-\bu^w)\cdot \bn_0 < 0}
  H_{\alpha} \left( \frac{\bv - \lu}{\sqrt{\lT}} \right)
  f_M(t,\bx_0,\bv) \dd\bv = \\
& \quad \frac{m}{\alpha!} \lT^{\frac{|\alpha|}{2}}
  \int_{(\bv-\bu^w)\cdot \bn_0 < 0}
  H_{\alpha} \left( \frac{\bv - \lu}{\sqrt{\lT}} \right)
  [\omega f_{\mM}^w(t,\bx_0,\bv) + (1-\omega) f_M(t,\bx_0,\bv^*)] \dd\bv
\end{split}
\end{equation}
for all $\alpha \in \mathcal{A}$. Here the function $f$ in
\eqref{eq:mass_conser} should be changed to $f_M$ when defining the
``wall Maxwellian'' $f_{\mM}^w$. The boundary conditions given by
\eqref{eq:bc} also agree with Grad's idea of using ``odd moments''
to ensure the continuity of the boundary conditions with respect to
the accommodation coefficient $\omega$. We refer the readers to
\cite{Grad} for more details.

For a general normal vector $\bn_0$, the above method still applies.
We need to assume $\lu \cdot \bn_0 = \bu^w \cdot \bn_0$, and replace
$H_{\alpha}\left( (\bv - \lu)/\sqrt{\lT} \right)$ by
$H_{\alpha}\left( {\bf R} (\bv - \lu)/\sqrt{\lT} \right)$ in
\eqref{eq:bc}, where $\bf R$ is a rotation matrix satisfying ${\bf R}
\bn_0 = (1,0,0)^T$. The remaining task is just to evaluate the
intergrals in \eqref{eq:bc}. For the case $\bn_0 = (1,0,0)^T$, this
has been done in \cite{Microflows1D}. The results are
\begin{equation}
  \label{eq:boundary_coe}
  \begin{aligned}
    \tf_{\alpha} = \frac{2\omega}{2 - \omega} & \left[
      \sqrt{\frac{2\pi}{\theta^w}}
      \hat{J}_{\alpha_1}J_{\alpha_2}(u_2^w-u_2)J_{\alpha_3}(u_3^w-u_3)
      \sum_{k=0}^{\lfloor M/2 \rfloor} S(1, 2k)\lT^{1/2-k}\tf_{2ke_1}
    \right.  \\
    & \quad \left. + \sum_{k=0}^{K(\alpha)}S(\alpha_1,
      2k)\lT^{\alpha_1/2-k} \tf_{\alpha + (2k - \alpha_1)e_1}\right],
    \quad \alpha \in \mathcal{A}.
  \end{aligned}
\end{equation}
where $K(\alpha) = \lfloor(M-\alpha_2 - \alpha_3) /2\rfloor$, and
$J_r(\cdot)$ and $\hat{J}_r(\cdot)$ are recursively defined by
\begin{gather*}
  J_{-1}(u)= 0, \qquad J_0(u) = 1, \qquad 
  J_r(u) = \frac{1}{r}[(\theta^w - \lT)J_{r-2}(u) + uJ_{r-1}(u)],
    \quad r\geqslant 1; \\
  S_0 = 0,    \qquad S_1 =
  \sqrt{\frac{\theta^w}{2\pi}}, \qquad  S_r = -\frac{r-2}{r(r-1)}\lT
  S_{r-2}, \quad r \geqslant 2; \\
  \hat{J}_{-1} = 0, \qquad \hat{J}_0 = 1/2,  \qquad  
  \hat{J}_r = \frac{1}{r}(\theta^w - \lT)\hat{J}_{r-2} - S_r,
  \quad r \geqslant 1. 
\end{gather*}
To define $S(\cdot,\cdot)$, we first introduce $K(\cdot,\cdot)$ by
\begin{equation}
  \label{eq:K}
  K(r, s) =\left\{
  \begin{array}{ll}
    \frac{(-1)^{(r + s -1)/2}\sqrt{2\pi}(s-1)!!}{r
    2^{(r-1)/2}\left(\frac{r-1}{2}\right)!},
    & r \text{ is odd and } s \text{ is even},  \\
    0, & \text{otherwise},
  \end{array}\right.
\end{equation}
which makes it convenient to define $S(\cdot, \cdot)$:
\begin{equation}
  \label{eq:p_s}
  S(r, s) = \left\{ 
    \begin{array}{ll}
      1/2, & r = s= 0, \\
      K(1, s-1), & r = 0 \text{ and } s\neq 0, \\
      K(r, 0), & r \neq 0 \text{ and } s = 0, \\
      K(r, s) + S(r -1, s-1) s / r, & \text{otherwise}.
    \end{array}
\right.
\end{equation}
It can be verified that when $\alpha = e_1 = (1,0,0)$, the boundary
condition \eqref{eq:boundary_coe} can be simplified as $\tf_{e_1} =
0$, which indicates that the mass flux on the boundary is zero. In
this paper, such a special case ($\bn_0 = (1,0,0)^T$) is sufficient
for our numerical experiments. General discussions on the
implementation of boundary conditions will be left for the future
work.


\section{Numerical algorithms and experiments}
\label{sec:num}
Numerical algorithms to solve the system \eqref{eq:moment-sys-matrix}
with the boundary condition \eqref{eq:boundary_coe} are briefly
introduced in this section. Two spatially one-dimensional problems
and a spatially two-dimensional problem, with the convenient setting
\begin{equation}
\pd{\bdf}{x_2} = \pd{\bdf}{x_3} \equiv 0 \quad \text{and} \quad
\pd{\bdf}{x_3} \equiv 0
\end{equation}
respectively,
are then given to illustrate the effectiveness of the proposed solver.
Here $\bdf$ is still the vector of coefficients for a
three-dimensional distribution function.

\subsection{Numerical algorithm}
\label{sec:num-alg}
Suppose the spatial domain $\Omega \subset \mathbb{R}^{N}$ is
discretized by a uniform grid with cell size $\Delta x$ and cell
centers $\bx_j=(x_{j_1},\ldots,x_{j_N})$, $j\in \mathbb{Z}^N$. Using
$\bdf_{j}^{n}$ to approximate the average of $\bdf$ over the $j$th
grid cell $[x_{j_{1}-1/2}, x_{j_{1}+1/2}] \times \cdots \times
[x_{j_{N}-1/2}, x_{j_{N}+1/2}]$ at time $t^{n}$, the system
\eqref{eq:moment-sys-matrix} can be solved by Euler's method with time
step size $\Delta t$ as following:
\begin{equation}
  \label{eq:numerical-scheme}
  \bdf_{j}^{n+1} = \bdf_{j}^{n} - \frac{\Delta t}{\Delta x}
\sum_{d=1}^{N} \left[ \boldsymbol{F}_{j+\frac{1}{2} e_{d}}^{n} -
\boldsymbol{F}_{j-\frac{1}{2}e_{d}}^{n} \right] + \Delta t
\boldsymbol{Q}(\bdf_{j}^{n}),
\end{equation}
where the finite volume method is employed for spatial discretization,
and $\boldsymbol{F}_{j+\frac{1}{2}e_{d}}^{n}$ is the numerical flux at
the boundary between the cells with center $\bx_j$ and $\bx_{j+e_d}$.
In the present experiments, the HLL flux \cite{HLL}, given by
\begin{align}
  \label{eq:HLL-flux}
  \boldsymbol{F}_{j+\frac{1}{2}e_{d}}^{n} = \left\{
    \begin{aligned}
      & {\bf A}_{d} \bdf_{j+\frac{1}{2}e_{d}}^{n, L}, & & \lambda_{d}^{L} \geq 0, \\
      & \frac{\lambda_{d}^{R} {\bf A}_{d} \bdf_{j+\frac{1}{2}e_{d}}^{n,L} -
        \lambda_{d}^{L} {\bf A}_{d} \bdf_{j+\frac{1}{2}e_{d}}^{n,R} + \lambda_{d}^{R}
        \lambda_{d}^{L} (\bdf_{j+\frac{1}{2}e_{d}}^{n,R} -
        \bdf_{j+\frac{1}{2}e_{d}}^{n,L})}{\lambda_{d}^{R} - \lambda_{d}^{L}}, \!\!\! &&
      \lambda_{d}^{L} < 0 < \lambda_{d}^{R}, \\
      & {\bf A}_{d} \bdf_{j+\frac{1}{2}e_{d}}^{n,R}, & & \lambda_{d}^{R} \leq 0,
    \end{aligned}
  \right.
\end{align}
is adopted. Here $\lambda_{d}^{L} = \overline{u}_{d} - C_{M+1}\sqrt{\lT}$
and $\lambda_{d}^{R} = \overline{u}_{d} + C_{M+1} \sqrt{\lT}$, where
$C_{M+1}$ is the maximal root of the Hermite polynomial of degree
$M+1$. In our experiments, $\lu$ will be set to be $0$, and thus only
the middle case of \eqref{eq:HLL-flux} is active. The scheme
\eqref{eq:numerical-scheme} can be improved straightforwardly to
higher-order temporal schemes by Runge-Kutta methods. In order to get
second-order spatial accuracy, the approximate solutions on the cell
boundary $\bdf_{j-\frac{1}{2}e_{d}}^{n,R}$ and $\bdf_{j+\frac{1}{2}e_{d}}^{n,L}$ are computed by
the linear reconstruction
\begin{equation}
  \label{eq:linear-recon}
  \bdf_{j-\frac{1}{2}e_{d}}^{n,R} = \bdf_{j}^{n} -\frac{1}{2} \Delta x \bg_{d}^{n},
\quad \bdf_{j+\frac{1}{2}e_{d}}^{n,L} = \bdf_{j}^{n} + \frac{1}{2} \Delta x \bg_{d}^{n}, 
\end{equation}
with
$\bg_{d}^{n} = \frac{1}{2} \left(\bdf_{j+e_{d}}^{n} - \bdf_{j-e_{d}}^{n}
\right)/\Delta x$.

In our numerical experiments, we are interested in the steady state of
microflows. However, due to the stability restriction of the explicit
time-stepping scheme, the time step size should be chosen to satisfy
the CFL condition
\begin{equation}
  \label{eq:CFL}
  \Delta t \sum_{d=1}^{N}\frac{\vert\overline{u}_{d}\vert + C_{M+1} \sqrt{\lT}}{\Delta x} < 1,
\end{equation}
which indicates a long time simulation would be taken to achieve the
steady state. Such a method will be used in our two-dimensional
examples to be shown in Section \ref{sec:2D}. For one-dimensional
steady-state problems, several additional techniques can be taken
into account to accelerate the simulation by giving up the time
accuracy of the solution. A simple way is to revise the computation of
\eqref{eq:numerical-scheme} on the whole spatial domain from the
Jacobi-type iteration into a cell-by-cell symmetric Gauss-Seidel (SGS)
iteration as shown in \cite{hu2016acceleration}. The SGS iteration is
in general several times faster than the explicit time-stepping
scheme, although for both methods, the total number of iterations is
expected to grow linearly as the grid number increases.

Further acceleration of the steady-state computation can be obtained
by using the multigrid technique, which has been explored in
\cite{hu2014nmg, hu2015}. The same framework of the nonlinear
multigrid method as proposed in \cite{hu2014nmg} is used in our
simulation, except that the single level iteration is replaced by the
above SGS iteration. By noting that $\lu$ and $\lT$ are
constants,\footnote{In \cite{hu2014nmg}, basis functions vary
spatially. The idea has been sketched in Section \ref{sec:intro} and 
the implementation is more difficult due to the nonlinearity.} the
implementation is in fact much easier than that in \cite{hu2014nmg}.

\subsection{One-dimensional numerical experiments}
\label{sec:num-exp}
Numerical experiments of the planar Couette flow and the Fourier flow
are carried out below. Numerical solutions of the quadratic collision
term \eqref{eq:Qstar_f} as well as its linearization
\eqref{eq:linearized-col} are provided. In all simulations, a uniform
grid with $256$ cells is used for spatial discretization, and the gas
of argon, which has molecule mass $\mfm = 6.63\times 10^{-26}{\rm kg}$
and molecule diameter $d_{\rm ref} = 4.17 \times 10^{-10}{\rm m}$ at
the reference temperature $T_{\rm ref} = 273.15{\rm K}$, is
considered. The Maxwellian with density $\rho = 9.282\times
10^{-6}{\rm kg} \cdot {\rm m}^{-3}$, velocity $\bu = 0 {\rm m/s}$ and
temperature $T=273.15{\rm K}$ is adopted to set the initial value of
the simulation. In order to match the reference results produced by
the DSMC method \cite{Bird}, the viscosity coefficient $\mu$ used in
the collision term \eqref{eq:Qstar_f} is set to be
\begin{equation}
  \mu = \frac{60 (\eta-1)^{2} \sqrt{\mfm k_{B} T_{\rm
ref}/\pi}}{(\eta-2)(3\eta-5)d_{\rm ref}^{2}} \cdot
\left(\frac{T}{T_{\rm ref}}\right)^{\frac{1}{2}(\eta+3)/(\eta-1)},
\end{equation}
where the Boltzmann constant
$k_{B} = 1.380658 \times 10^{-23} {\rm m^{2}\cdot kg \cdot s^{-2}
  \cdot K^{-1}}$, and the index $\eta$ is set to be $10$.

\subsubsection{The planar Couette flow}
\label{sec:num-exp-couette}

Consider the gas between two infinite parallel plates, which have the
temperature $T^{w}=273.15{\rm K}$, and move in the opposite direction
along the plate with the speed $119.25{\rm m/s}$. Both plates are
assumed to be completely diffusive, which indicates the accommodation
coefficient $\omega = 1$ in the boundary condition. Driven by the
motion of the plates, the flow will reach a steady state as time tends
to infinity. Numerically, we let the computational domain be
$[-D/2, D/2]$, where $D$ is the distance between the two plates. Four
choices of the distance, i.e., $D=0.092456 {\rm m}$,
$0.018491 {\rm m}$, $D = 0.003698 {\rm m}$ and $0.00074 {\rm m}$,
corresponding to the dimensionless Knudsen number $\Kn = 0.1$, $0.5$,
$2.5$ and $12.5$ respectively, are investigated.  Additionally, we set
$\lu = 0$ and $\lT = \frac{k_{B}}{m}T_{\rm ref}$ in this example.

(1) $D=0.092456{\rm m}$, $\Kn=0.1$: Numerical results for the
quadratic collision term \eqref{eq:Qstar_f} with $M_{0}=5$, as well as
the DSMC solutions, are listed in
\figurename~\ref{fig:couette-Kn01-binaryIM5}. Only half of the domain
is plotted, by noting that the density, the temperature and the shear
stress are even functions, and the heat flux is an odd function. Fast
convergence of these quantities is observed as $M$ increases. All
results coincide very well with the DSMC results. Note that the
actually relative error of shear stress $\sigma_{12}$ is less than
$1.5\%$ even for the worst case $M=5$, although an evident deviation
can be seen from the figure. It turns out that a small $M$, e.g.,
$M=5$, with $M_{0}=5$ for the quadratic collision term
\eqref{eq:Qstar_f} is enough to give satisfactory results in this
case. In fact, even for the linearized collision term
\eqref{eq:linearized-col} with $M_{0}=5$, numerical results also agree
well with the results shown in
\figurename~\ref{fig:couette-Kn01-binaryIM5}, except that a slight
deviation can be observed for temperature. The comparison of
temperature profiles between the quadratic collision term
\eqref{eq:Qstar_f} and its linearization \eqref{eq:linearized-col} can
be found in
\figurename~\ref{fig:couette-Kn01-linearizedIM5-temperature}, from
which one can see that the quadratic form provides more accurate
description of the fluid states.

(2) $D=0.018491{\rm m}$, $\Kn=0.5$: As the Knudsen number gets larger,
larger $M$ is necessary to be considered. Numerical results for the
quadratic collision term \eqref{eq:Qstar_f} with $M_{0}=5$, as well as
the DSMC solutions, are shown in
\figurename~\ref{fig:couette-Kn05-binaryIM5}. Again, only half of the
domain is displayed. In this case, significant deviation can be
observed between solutions with small $M$ and the DSMC solutions. And
the solutions behave differently for odd and even $M$, as exhibited
many times in the literature (see e.g. \cite{Cai2018, Microflows1D}).
In spite of this, the convergence can still be obtained for all plotted
quantities, and they match the DSMC solutions better as $M$ increases.
Nevertheless, the quadratic collision term \eqref{eq:Qstar_f} with
$M_{0}=5$ still seems to be sufficient for Knudsen number $0.5$, as
long as $M$ is sufficiently large.

Numerical results for the linearized collision term
\eqref{eq:linearized-col} with $M_{0}=10$, which is expected better
than the same collision term with $M_{0}=5$, are presented in
\figurename~\ref{fig:couette-Kn05-linearizedIM5} for comparison.
Although convergence of these quantities is also observed with
respect to $M$, the results are not as good as those obtained by
quadratic collision term with $M_{0}=5$ and the same $M$. More
precisely, there is a significant gap between the possible limiting
temperature and the reference temperature given by the DSMC
method. This indicates that the linearized collision term is indeed
inadequate for problems with such a Knudsen number.

(3) $D = 0.003698 {\rm m}$, $\Kn = 2.5$: This example tests our
collision model for the flow in the transitional regime. Since the
Knudsen number is even larger, we consider only the quadratic
collision term \eqref{eq:Qstar_f} with $M_0 = 10$. The comparison
between our results and the DSMC solutions are provided in
\figurename~\ref{fig:couette-Kn25-binaryIM10}. It shows that the
Hermite spectral method still provides high-quality solutions for
lower-order moments such as density, temperature and shear stress.
Precisely speaking, for density, the relative deviation between the
DSMC solution and our solution is lower than $0.03\%$, and the
relative deviation of temperature and shear stress is less than
$0.05\%$ and $0.9\%$ respectively. For the heat flux, our solution
agrees well with DSMC results when the flow is away from the boundary,
while obvious discrepancy can be observed near the boundary, where and
the relative deviation is close to $9\%$.

(4) $D = 0.00074 {\rm m}$, $\Kn = 12.5$: For such a high Knudsen
number, the flow is in the free molecular regime. The strong
nonequilibrium requires an accurate modelling of the collision term to 
precisely capture the flow structure. Again we only present the
results for the quadratic collision term \eqref{eq:Qstar_f} with $M_0
= 10$. Our numerical results and DSMC solutions are shown in
\figurename~\ref{fig:couette-Kn125-binaryIM10}, where we see that our
solutions are comparable with the DSMC solution for the density,
temperature and shear stress with $M = 65$. For the density, the
relative deviation between our solution and the DSMC solution is
$0.02\%$, and for temperature and shear stress, the relative errors
are less than $0.3\%$ and $1\%$, respectively. However, the structure
of heat flux is not well captured. The relative deviation is around
$30\%$. This may be because when the Knudsen number is large, a sharp
discontinuity exists in the distribution function, which causes Gibbs
phenomenon when the distribution function is approximated using the
spectral method. Thus, the spectral Galerkin method becomes
inefficient. A similar observation is also presented in
\cite{LWuComparative2017}.

\begin{figure}[!htb]
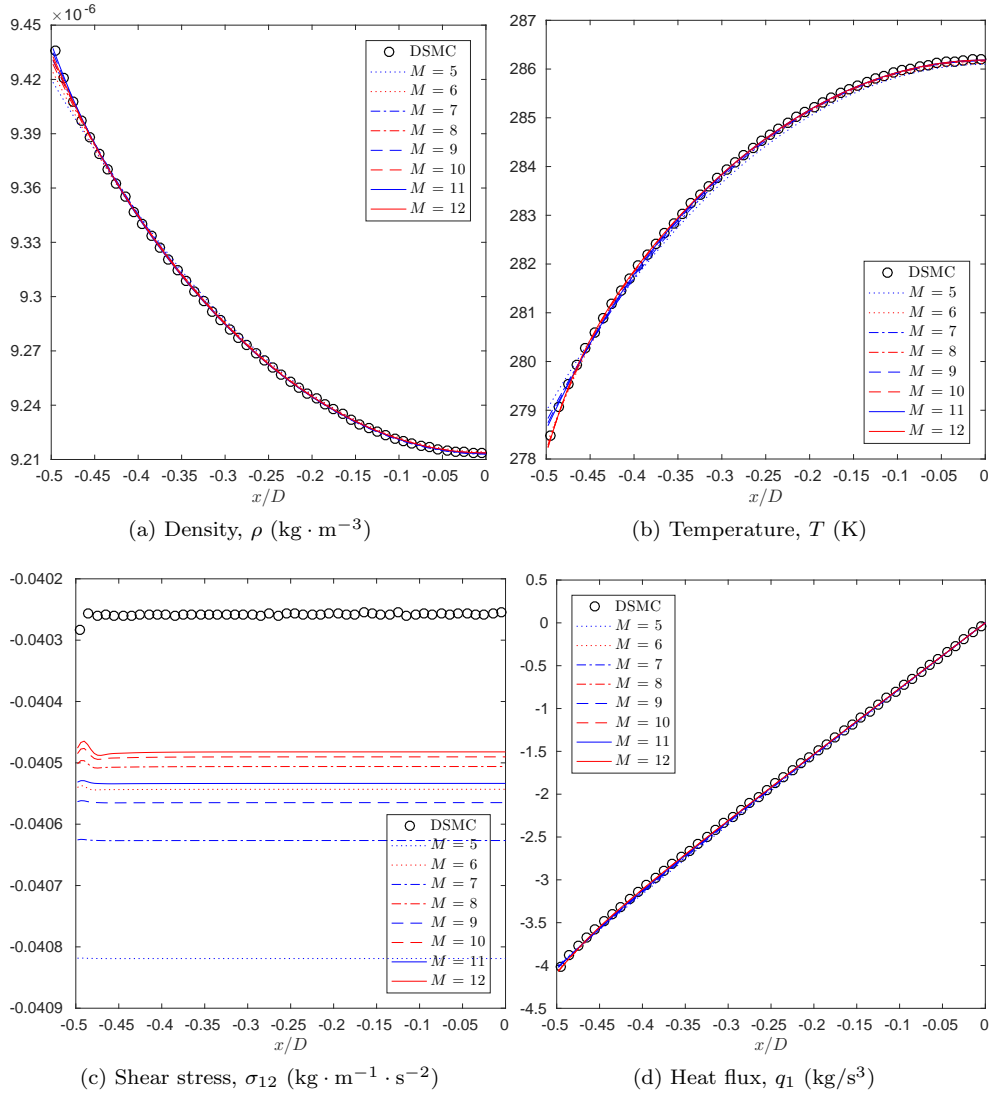

  \centering
  \subfloat[Density, $\rho~({\rm kg\cdot m^{-3}})$]{\includegraphics[width=0.49\textwidth,clip]{couette_Kn01_density_binary_IM5.pdf}}\hfill
  \subfloat[Temperature, $T~({\rm K})$]{\includegraphics[width=0.49\textwidth,clip]{couette_Kn01_temperature_binary_IM5.pdf}} \\
  \subfloat[Shear stress, $\sigma_{12}~({\rm kg \cdot m^{-1}\cdot s^{-2}})$]{\includegraphics[width=0.51\textwidth,clip]{couette_Kn01_shearstress_binary_IM5.pdf}} \hfill
  \subfloat[Heat flux, $q_1~(\rm kg/s^{3})$]{\includegraphics[width=0.48\textwidth,clip]{couette_Kn01_heatflux_binary_IM5.pdf}} 
\caption{Solution of the Couette flow for the quadratic collision term
  \eqref{eq:Qstar_f} with $M_{0}=5$ and $D=0.092456 {\rm
    m}$ ($\Kn=0.1$).} 
  \label{fig:couette-Kn01-binaryIM5}
\end{figure}

\begin{figure}[!htb]
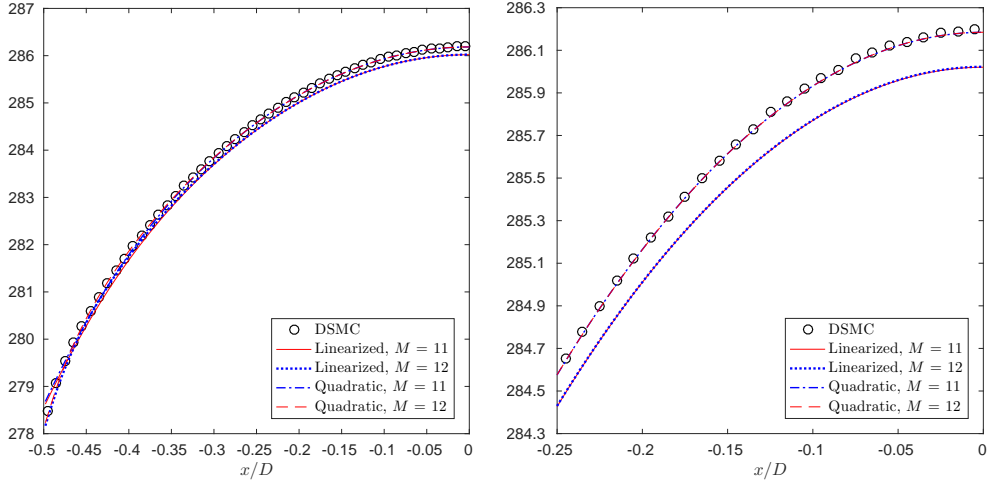

  \centering
  \includegraphics[width=0.475\textwidth,clip]{couette_Kn01_temperature_binANDlin_IM5.pdf}\hfill
  \includegraphics[width=0.49\textwidth,clip]{couette_Kn01_temperature_binANDlin_IM5_zoom.pdf}
  \caption{Comparison of temperature $({\rm K})$ profiles (left) and
    its zoom (right) between the quadratic collision term
    \eqref{eq:Qstar_f} and its linearization \eqref{eq:linearized-col}
    with $M_{0}=5$ and $D=0.092456 {\rm m}$
    ($\Kn=0.1$).} 
  \label{fig:couette-Kn01-linearizedIM5-temperature}
\end{figure}

\begin{figure}[!htb]
  \centering
  \subfloat[Density, $\rho~({\rm kg\cdot m^{-3}})$]{\includegraphics[width=0.49\textwidth,clip]{couette_Kn05_density_binary_IM5.pdf}}\hfill
  \subfloat[Temperature, $T~({\rm K})$]{\includegraphics[width=0.49\textwidth,clip]{couette_Kn05_temperature_binary_IM5.pdf}} \\
  \subfloat[Shear stress, $\sigma_{12}~({\rm kg \cdot m^{-1}\cdot s^{-2}})$]{\includegraphics[width=0.51\textwidth,clip]{couette_Kn05_shearstress_binary_IM5.pdf}} \hfill
  \subfloat[Heat flux, $q_1~(\rm kg/s^{3})$]{\includegraphics[width=0.47\textwidth,clip]{couette_Kn05_heatflux_binary_IM5.pdf}} 
\caption{Solution of the Couette flow for the quadratic collision term
  \eqref{eq:Qstar_f} with $M_{0}=5$ and $D=0.018491 {\rm
    m}$ ($\Kn=0.5$).} 
  \label{fig:couette-Kn05-binaryIM5}
\end{figure}

\begin{figure}[!htb]
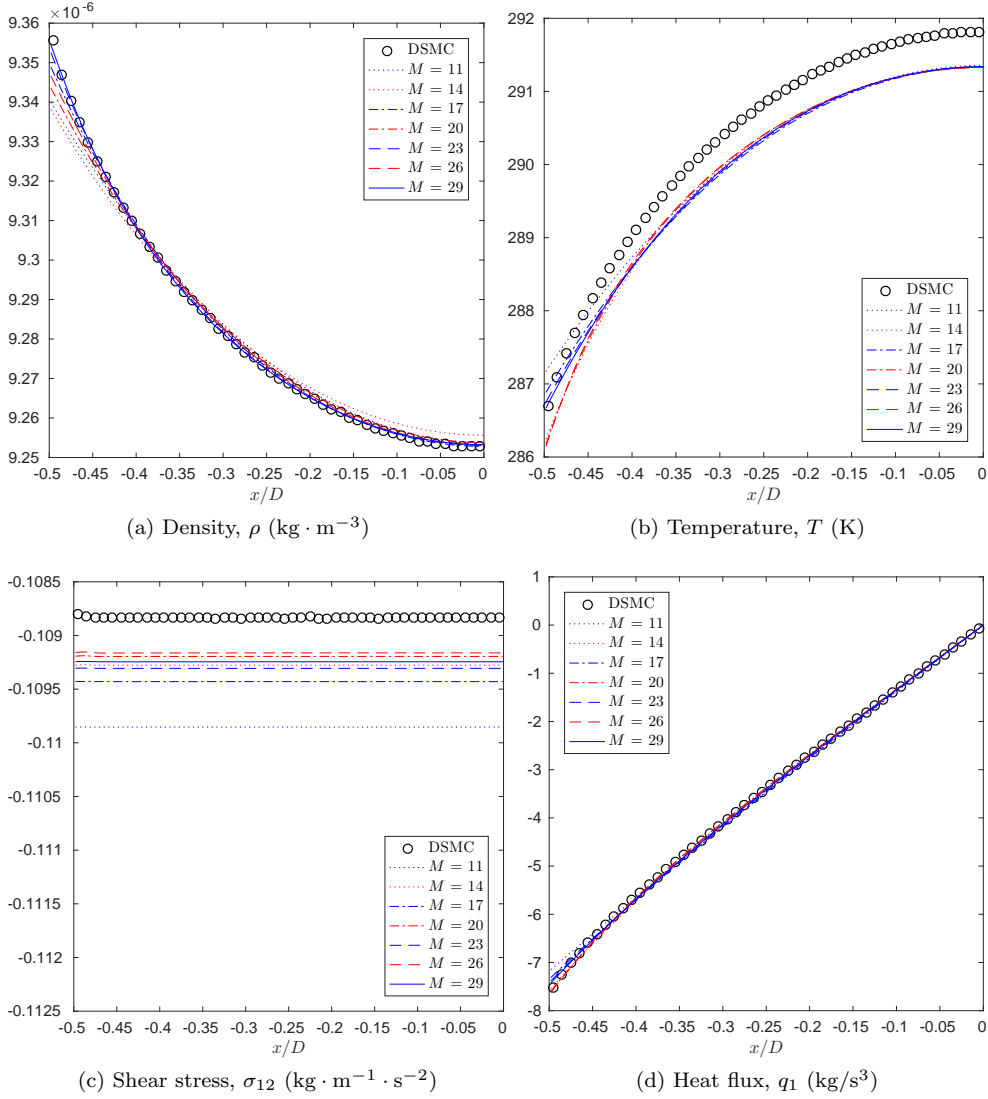

  \centering
  \subfloat[Density, $\rho~({\rm kg\cdot m^{-3}})$]{\includegraphics[width=0.49\textwidth,clip]{couette_Kn05_density_linearized_IM10.pdf}}\hfill
  \subfloat[Temperature, $T~({\rm K})$]{\includegraphics[width=0.49\textwidth,clip]{couette_Kn05_temperature_linearized_IM10.pdf}} \\
  \subfloat[Shear stress, $\sigma_{12}~({\rm kg \cdot m^{-1}\cdot s^{-2}})$]{\includegraphics[width=0.51\textwidth,clip]{couette_Kn05_shearstress_linearized_IM10.pdf}} \hfill
  \subfloat[Heat flux, $q_1~(\rm kg/s^{3})$]{\includegraphics[width=0.47\textwidth,clip]{couette_Kn05_heatflux_linearized_IM10.pdf}} 
\caption{Solution of the Couette flow for the linearized collision term
  \eqref{eq:linearized-col} with $M_{0}=10$ and $D=0.018491 {\rm
    m}$ ($\Kn=0.5$).} 
  \label{fig:couette-Kn05-linearizedIM5}
\end{figure}

\begin{figure}[!htb]
  \centering
  \subfloat[Density, $\rho~({\rm kg\cdot m^{-3}})$]{\includegraphics[width=0.49\textwidth,clip]{couette_Kn25_density_binary_IM10.pdf}}\hfill
  \subfloat[Temperature, $T~({\rm K})$]{\includegraphics[width=0.49\textwidth,clip]{couette_Kn25_temperature_binary_IM10.pdf}} \\
  \subfloat[Shear stress, $\sigma_{12}~({\rm kg \cdot m^{-1}\cdot s^{-2}})$]{\includegraphics[width=0.51\textwidth,clip]{couette_Kn25_shearstress_binary_IM10.pdf}} \hfill
  \subfloat[Heat flux, $q_1~(\rm kg/s^{3})$]{\includegraphics[width=0.47\textwidth,clip]{couette_Kn25_heatflux_binary_IM10.pdf}} 
\caption{Solution of the Couette flow for the quadratic collision term
  \eqref{eq:Qstar_f} with $M_{0}=10$ and $D=0.003698 {\rm
    m}$ ($\Kn=2.5$).} 
  \label{fig:couette-Kn25-binaryIM10}
\end{figure}

\begin{figure}[!htb]
  \centering
  \subfloat[Density, $\rho~({\rm kg\cdot m^{-3}})$]{\includegraphics[width=0.49\textwidth,clip]{couette_Kn125_density_binary_IM10.pdf}}\hfill
  \subfloat[Temperature, $T~({\rm K})$]{\includegraphics[width=0.49\textwidth,clip]{couette_Kn125_temperature_binary_IM10.pdf}} \\
  \subfloat[Shear stress, $\sigma_{12}~({\rm kg \cdot m^{-1}\cdot s^{-2}})$]{\includegraphics[width=0.51\textwidth,clip]{couette_Kn125_shearstress_binary_IM10.pdf}} \hfill
  \subfloat[Heat flux, $q_1~(\rm kg/s^{3})$]{\includegraphics[width=0.47\textwidth,clip]{couette_Kn125_heatflux_binary_IM10.pdf}} 
\caption{Solution of the Couette flow for the quadratic collision term
  \eqref{eq:Qstar_f} with $M_{0}=10$ and $D=0.00074 {\rm
    m}$ ($\Kn=12.5$).} 
  \label{fig:couette-Kn125-binaryIM10}
\end{figure}

\subsubsection{The Fourier flow}
\label{sec:num-exp-fourier}

The second benchmark problem is the Fourier flow which also studies
the motion of the gas between two infinite parallel plates. In
contrast to the planar Couette flow, both plates are stationary, while
their temperature is different. Specifically, the left plate has the
temperature $T_{l}^{w} = 273.15{\rm K}$ and the right plate has the
temperature $T_{r}^{w} = 4 T_{l}^{w} = 1092.6{\rm K}$. In this
situation, the gas also reaches a steady state as time goes. To
simulate it, we adopt $\lu = 0$ and
$\lT = \frac{k_{B}}{\mfm}T_{r}^{w}$. The accommodation coefficient in
the boundary condition is set to be $\omega = 1$, and the
computational domain is still $[-D/2, D/2]$ with $D$ being the
distance between two plates. Four distances $D=0.092456 {\rm m}$,
$0.018491 {\rm m}$, $D = 0.003698 {\rm m}$ and $0.00074 {\rm m}$ with
the corresponding dimensionless Knudsen number $\Kn = 0.1$, $0.5$,
$2.5$ and $12.5$ respectively, are considered. Only results for
quadratic collision term \eqref{eq:Qstar_f} are presented.

(1) $D=0.092456{\rm m}$, $\Kn=0.1$: Numerical results for density
$\rho$, temperature $T$, normal stress $\sigma_{11}$ and heat flux
$q_{1}$ with $M_{0}=5$, together with the DSMC solutions, are shown in
\figurename~\ref{fig:fourier-Kn01-binaryIM5}. Our results coincide
very well with the DSMC solutions for density and temperature, while a
small deviation for normal stress $\sigma_{11}$ and heat flux $q_{1}$
can be observed. Note that for heat flux $q_{1}$, the relative
deviation between the DSMC solution and our solution is less than
$3\%$ for all $M$. It is worth mentioning that $q_1$ should be a
constant in the steady-state solution, while the DSMC method provides
a slanting profile. Such a result suggests the possible numerical
error in the DSMC method, although we have run the DSMC code more than
six days. It is left to the future work to determine what this
constant should be.

Nevertheless, the deviation between our results and the DSMC solutions
can be reduced by increasing $M_{0}$ in the collision term. As an
example, we plot the results of normal stress $\sigma_{11}$ and heat
flux $q_{1}$ with $M_{0}=10$ in
\figurename~\ref{fig:fourier-Kn01-binaryIM10}. Remarkable improvement
can be observed.

(2) $D=0.018491{\rm m}$, $\Kn=0.5$: Numerical results with $M_{0}=5$
and $M_{0}=10$, are presented in
\figurename~\ref{fig:fourier-Kn05-binaryIM5} and
\ref{fig:fourier-Kn05-binaryIM10}, respectively. For this larger
Knudsen number, evident deviations for all plotted quantities, in
comparison to the DSMC results, can be observed even for a large $M$
in the case $M_{0}=5$. This indicates $M_{0}=5$ is not enough for the
simulation in this case.

As shown in \figurename~\ref{fig:fourier-Kn05-binaryIM10}, the results
with $M_{0}=10$ again show considerable improvement, especially for
$M$ which is odd and larger than $20$. For these $M$, all plotted
quantities match the DSMC solutions quite well. It can also be
observed that convergence of all plotted quantities with an even $M$
is much slower, especially in the region near the left plate. The
underlying reason remains to be further studied.

(3) $D = 0.003698 {\rm m}$, $\Kn = 2.5$: Numerical solutions for $M_0
= 10$ in this case are given in
\figurename~\ref{fig:fourier-Kn25-binaryIM10}, which shows the results
for $M = 35, 45, 55, 65$. Despite a large Knudsen number, for all
quantities, the profiles for different $M$ are very close to each
other, and they all agree well with DSMC solutions. The maximum
relative deviation for all these quantities is less than $0.2\%$,
which again shows the applicability of the Hermite spectral method for
transitional flows.

(4) $D = 0.00074 {\rm m}$, $\Kn = 12.5$: Numerical results for density
$\rho$, temperature $T$, normal stress $\sigma_{11}$ and heat flux
$q_{1}$ with $M_{0}=10$, together with the DSMC solutions, are shown
in \figurename~\ref{fig:fourier-Kn125-binaryIM10}. It seems that our
solutions are comparable with the DSMC solution for all these four
quantities with $M = 65$.  The relative deviation between our solution
and the DSMC solution for the density and the temperature is $1.2\%$
and $1.5\%$, respectively. But for the normal stress, the relative
deviation is up to $15\%$. The relative deviation for heat flux is
still quite small as to $0.1\%$. This may indicate the inadequacy of
$M_0$ in this simulation.

\begin{figure}[!htb]
  \centering
  \subfloat[Density, $\rho~({\rm kg\cdot m^{-3}})$]{\includegraphics[width=0.49\textwidth,clip]{fourier_Kn01_density_binary_IM5.pdf}}\hfill
  \subfloat[Temperature, $T~({\rm K})$]{\includegraphics[width=0.50\textwidth,clip]{fourier_Kn01_temperature_binary_IM5.pdf}} \\
  \subfloat[Normal stress, $\sigma_{11}~({\rm kg \cdot m^{-1}\cdot s^{-2}})$]{\includegraphics[width=0.48\textwidth,clip]{fourier_Kn01_stress_x_binary_IM5.pdf}} \hfill
  \subfloat[Heat flux, $q_1~(\rm kg/s^{3})$]{\includegraphics[width=0.5\textwidth,clip]{fourier_Kn01_heatflux_binary_IM5.pdf}} 
\caption{Solution of the Fourier flow for the quadratic collision term
  \eqref{eq:Qstar_f} with $M_{0}=5$ and $D=0.092456 {\rm
    m}$ ($\Kn=0.1$).} 
  \label{fig:fourier-Kn01-binaryIM5}
\end{figure}

\begin{figure}[!htb]
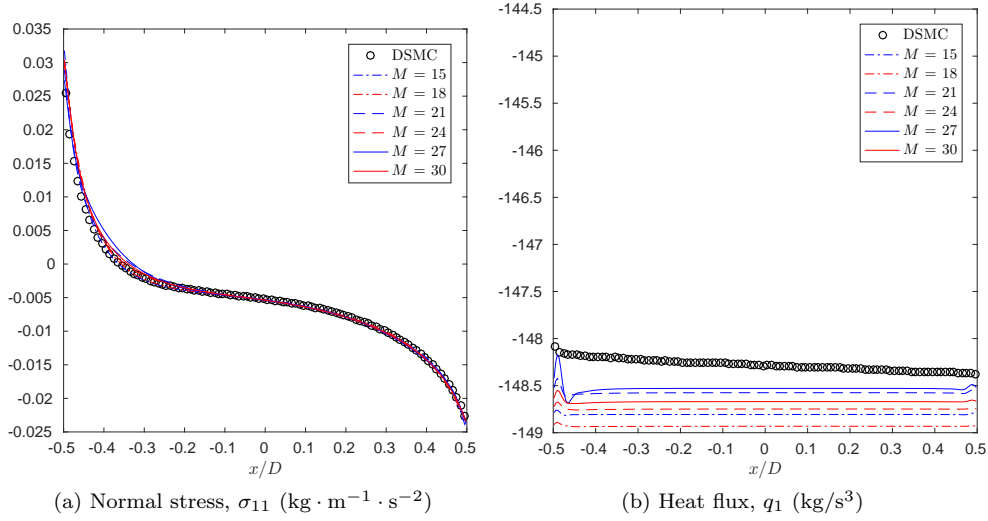

  \centering
  \subfloat[Normal stress, $\sigma_{11}~({\rm kg \cdot m^{-1}\cdot s^{-2}})$]{\includegraphics[width=0.48\textwidth,clip]{fourier_Kn01_stress_x_binary_IM10.pdf}} \hfill
  \subfloat[Heat flux, $q_1~(\rm kg/s^{3})$]{\includegraphics[width=0.5\textwidth,clip]{fourier_Kn01_heatflux_binary_IM10.pdf}} 
\caption{Normal stress and Heat flux of the Fourier flow for the quadratic collision term
  \eqref{eq:Qstar_f} with $M_{0}=10$ and $D=0.092456 {\rm
    m}$ ($\Kn=0.1$).} 
  \label{fig:fourier-Kn01-binaryIM10}
\end{figure}

\begin{figure}[!htb]
  \centering
  \subfloat[Density, $\rho~({\rm kg\cdot m^{-3}})$]{\includegraphics[width=0.49\textwidth,clip]{fourier_Kn05_density_binary_IM5.pdf}}\hfill
  \subfloat[Temperature, $T~({\rm K})$]{\includegraphics[width=0.49\textwidth,clip]{fourier_Kn05_temperature_binary_IM5.pdf}} \\
  \subfloat[Normal stress, $\sigma_{11}~({\rm kg \cdot m^{-1}\cdot s^{-2}})$]{\includegraphics[width=0.48\textwidth,clip]{fourier_Kn05_stress_x_binary_IM5.pdf}} \hfill
  \subfloat[Heat flux, $q_1~(\rm kg/s^{3})$]{\includegraphics[width=0.49\textwidth,clip]{fourier_Kn05_heatflux_binary_IM5.pdf}} 
  \caption{Solution of the Fourier flow for the quadratic collision
    term \eqref{eq:Qstar_f} with $M_{0}=5$ and $D=0.018491 {\rm m}$
    ($\Kn=0.5$).} 
  \label{fig:fourier-Kn05-binaryIM5}
\end{figure}

\begin{figure}[!htb]
  \centering
  \subfloat[Density, $\rho~({\rm kg\cdot m^{-3}})$]{\includegraphics[width=0.49\textwidth,clip]{fourier_Kn05_density_binary_IM10.pdf}}\hfill
  \subfloat[Temperature, $T~({\rm K})$]{\includegraphics[width=0.49\textwidth,clip]{fourier_Kn05_temperature_binary_IM10.pdf}} \\
  \subfloat[Normal stress, $\sigma_{11}~({\rm kg \cdot m^{-1}\cdot s^{-2}})$]{\includegraphics[width=0.48\textwidth,clip]{fourier_Kn05_stress_x_binary_IM10.pdf}} \hfill
  \subfloat[Heat flux, $q_1~(\rm kg/s^{3})$]{\includegraphics[width=0.49\textwidth,clip]{fourier_Kn05_heatflux_binary_IM10.pdf}} 
  \caption{Solution of the Fourier flow for the quadratic collision
    term \eqref{eq:Qstar_f} with $M_{0}=10$ and $D=0.018491 {\rm m}$
    ($\Kn=0.5$).} 
  \label{fig:fourier-Kn05-binaryIM10}
\end{figure}

\begin{figure}[!htb]
  \centering
  \subfloat[Density, $\rho~({\rm kg\cdot m^{-3}})$]{\includegraphics[width=0.49\textwidth,clip]{fourier_Kn25_density_binary_IM10.pdf}}\hfill
  \subfloat[Temperature, $T~({\rm K})$]{\includegraphics[width=0.49\textwidth,clip]{fourier_Kn25_temperature_binary_IM10.pdf}} \\
  \subfloat[Normal stress, $\sigma_{11}~({\rm kg \cdot m^{-1}\cdot s^{-2}})$]{\includegraphics[width=0.48\textwidth,clip]{fourier_Kn25_stress_x_binary_IM10.pdf}} \hfill
  \subfloat[Heat flux, $q_1~(\rm kg/s^{3})$]{\includegraphics[width=0.49\textwidth,clip]{fourier_Kn25_heatflux_binary_IM10.pdf}} 
  \caption{Solution of the Fourier flow for the quadratic collision
    term \eqref{eq:Qstar_f} with $M_{0}=10$ and $D=0.003698 {\rm m}$
    ($\Kn=2.5$).} 
  \label{fig:fourier-Kn25-binaryIM10}
\end{figure}

\begin{figure}[!htb]
  \centering
  \subfloat[Density, $\rho~({\rm kg\cdot m^{-3}})$]{\includegraphics[width=0.49\textwidth,clip]{fourier_Kn125_density_binary_IM10.pdf}}\hfill
  \subfloat[Temperature, $T~({\rm K})$]{\includegraphics[width=0.49\textwidth,clip]{fourier_Kn125_temperature_binary_IM10.pdf}} \\
  \subfloat[Normal stress, $\sigma_{11}~({\rm kg \cdot m^{-1}\cdot s^{-2}})$]{\includegraphics[width=0.48\textwidth,clip]{fourier_Kn125_stress_x_binary_IM10.pdf}} \hfill
  \subfloat[Heat flux, $q_1~(\rm kg/s^{3})$]{\includegraphics[width=0.49\textwidth,clip]{fourier_Kn125_heatflux_binary_IM10.pdf}} 
  \caption{Solution of the Fourier flow for the quadratic collision
    term \eqref{eq:Qstar_f} with $M_{0}=10$ and $D=0.00074 {\rm m}$
    ($\Kn=12.5$).} 
  \label{fig:fourier-Kn125-binaryIM10}
\end{figure}

\subsection{Two-dimensional numerical experiments} \label{sec:2D}
As a preliminary study of our method in the multi-dimensional case, we
consider the two-dimensional lid-driven cavity flow which has been
studied in \cite{John2010, Wu2014, Cai2018}. In this case, the argon
gas ($m = 6.63 \times 10^{-26} {\rm kg}$) is confined in a square
cavity with side length $L = 1.25 \times 10^{-6} {\rm m}$.  The
temperature of the cavity walls is $T = T_{\mathrm{ref}} = 273 {\rm
K}$. The viscosity coefficient $\mu$ is set to be
\begin{displaymath}
\mu = \mu_{\mathrm{ref}}
  \left( \frac{T}{T_{\mathrm{ref}}} \right)^{\frac{1}{2} (\eta+3)/(\eta-1)},
\end{displaymath}
where the reference viscosity is $\mu_{\mathrm{ref}} = 2.117 \times
10^{-5} {\rm kg / (m \cdot s)}$, and the value of $\eta$ is $7.45$.
Initially, the gas is in a uniform equilibrium with velocity $\bu = 0
{\rm m/s}$ and temperature $T = 273 {\rm K}$, and the following two
initial densities are considered:
\begin{enumerate}
\item $\rho = 0.891 {\rm kg/m^3}$, corresponding to Knudsen number $\Kn =
  0.1$;
\item $\rho = 0.0891 {\rm kg/m^3}$, corresponding to Knudsen number $\Kn = 1.0$.
\end{enumerate}
The gas flow is driven by the top lid of the cavity, which moves right
at a constant speed $\bu_w = (50, 0, 0) {\rm m/s}$. We expect that the
steady state can be after sufficiently long time. The simulation is
carried out on a $100 \times 100$ grid by explicit time stepping until
$5.24 \times 10^{-8}{\rm s}$. For both cases, we choose $M_0 = 10$,
$\bar{\bu} = 0$ and $\lT = \frac{k_{B}}{m}T_{\rm ref}$ in our
numerical tests.

The simulation is run on the CPU model Intel Xeon E5-2680 v4 @
2.40GHz, and 28 threads are used in the simulation. Details of the
simulations are given in Table \ref{tab:data}. Here the total CPU time
is obtained by the C function {\tt clock()}, whose result is the sum
of CPU time for all threads. Inspired by the tables presented in
\cite{Dimarco2018}, we also provide the CPU time for each time step,
each spatial grid and each degree of freedom for easier comparison.

\begin{table}
\centering
\caption{Run-time data for the lid-driven cavity flow simulations}
\label{tab:data}
\begin{tabular}{ccc}
\hline
Test case & $\Kn = 0.1$ & $\Kn = 1.0$ \\
\hline
$M$ & $25$ & $35$ \\
Number of coefficients ($N_M$) & $3276$ & $8436$ \\
Time step ($\Delta t$) & $2.64 \times 10^{-12}{\rm s}$ & $2.19\times 10^{-12}{\rm s}$ \\
Number of time steps ($N_s$) & $19829$ & $23968$ \\
Total CPU time ($T_{\mathrm{total}}$) & $6.93 \times 10^6 {\rm s}$ & $1.07 \times 10^7 {\rm s}$ \\
CPU time per time step ($T_s = T_{\mathrm{total}} / N_s$) & $3.50 \times 10^2 {\rm s}$ & $4.46 \times 10^2 {\rm s}$ \\
CPU time per grid ($T_g = T_{s} / 100^2$) & $3.50 \times 10^{-2} {\rm s}$ & $4.46 \times 10^{-2} {\rm s}$ \\
CPU time per degree of freedom ($T_d = T_g / N_M$) & $1.07 \times 10^{-5} {\rm s}$ & $5.30 \times 10^{-6} {\rm s}$ \\
\hline
\end{tabular}
\end{table}

The results are again compared with DSMC results \cite{John2010},
which are provided in Figure \ref{fig:cavity-Kn01} and
\ref{fig:cavity-Kn1}. In general, two results agree well with each
other, while some discrepancy can be found on the boundary of the
domain. Such discrepancy is probably related to the Gibbs phenomenon
in the spectral method, since the distribution function on the
boundary of the domain is generally discontinuous. Possible
improvement includes using filters \cite{Aguirre2008} or other
boundary conditions \cite{Sarna2018}, which will be part of our future
work.

\begin{figure}[!htb]
  \centering
  \subfloat[Temperature]{\includegraphics[height=0.37\textwidth,clip]{CavityFlow_Kn=0p1_T.pdf}} \hfill
  \subfloat[Shear stress]{\includegraphics[height=0.37\textwidth,clip]{CavityFlow_Kn=0p1_sigma.pdf}}
  \caption{Numerical results for $\Kn = 0.1$. White contours: Hermite spectral method. Black contours: DSMC.}
  \label{fig:cavity-Kn01}
\end{figure}

\begin{figure}[!htb]
  \centering
  \subfloat[Temperature]{\includegraphics[height=0.37\textwidth,clip]{CavityFlow_Kn=1_T.pdf}} \hfill
  \subfloat[Shear stress]{\includegraphics[height=0.37\textwidth,clip]{CavityFlow_Kn=1_sigma.pdf}}
  \caption{Numerical results for $\Kn = 1.0$. White contours: Hermite spectral method. Black contours: DSMC.}
  \label{fig:cavity-Kn1}
\end{figure}


\section{Conclusion} \label{sec:conclusion}
Based on the Hermite spectral method, we have developed a numerical solver for
the Boltzmann equation with an approximate collision operator proposed in
\cite{QuadraticCol}. The approximate collision operator is derived from the
original quadratic collision operator, but the quadratic form is preserved only
for the first few moments. Our numerical simulation shows that a small number
of degrees of freedom for the quadratic part can already provide much better
results than the linear models, which makes it possible to design numerical
methods that can well balance the workload and the accuracy. Our major
contribution to the algorithm is a special implementation of the collision
operator. As is mentioned in Section \ref{sec:intro}, the implementation of
such a special collision operator in the spatially inhomogeneous case is not as
straightforward as for the spatially homogeneous and normalized equation
considered in \cite{QuadraticCol}. By introducing a fast algorithm to change
basis functions, we eventually obtain a numerical scheme with time complexity
$O(M_0^9 + M^4)$.

Such a numerical cost makes the algorithm highly promising when applied to more
complicated problems. Research works on more multi-dimensional problems and
polyatomic gases are ongoing.

\newpage

\bibliographystyle{siamplain}
\bibliography{article}
\end{document}